\newtheorem{theorem}{Theorem}[section]
\newtheorem{remark}{Remark}[section]
\newtheorem{corollary}{Corollary}[section]
\newbox{\LegendeSC}
\savebox{\LegendeSC}{
    \begin{pspicture}(0,0)(0.6,0)
    \psdots[dotstyle=square,linecolor=blue,dotsize=5pt](0.25,0.1)
    \psline[linewidth=0.03,linecolor=blue](0,0.1)(0.5,0.1)
    \end{pspicture}}
\newbox{\LegendeCL}
\savebox{\LegendeCL}{
    \begin{pspicture}(0,0)(0.5,0)
    \psdots[dotstyle=o,linecolor=red,dotsize=5pt](0.25,0.1)
    \psline[linewidth=0.03,linecolor=red](0,0.1)(0.5,0.1)
    \end{pspicture}}
\newbox{\LegendeIW}
\savebox{\LegendeIW}{
    \begin{pspicture}(0,0)(0.5,0)
    \psdots[dotstyle=triangle,linecolor=black,dotsize=5pt,dotangle=180](0.25,0.1)
    \psline[linewidth=0.03,linecolor=black](0,0.1)(0.5,0.1)
    \end{pspicture}}
\newbox{\LegendeW}
\savebox{\LegendeW}{
    \begin{pspicture}(0,0)(0.5,0)
    \psdots[dotstyle=triangle,linecolor=darkgreen,dotsize=5pt](0.25,0.1)
    \psline[linewidth=0.03,linecolor=darkgreen](0,0.1)(0.5,0.1)
    \end{pspicture}}
\newbox{\LegendeWtwo}
\savebox{\LegendeWtwo}{
    \begin{pspicture}(0,0)(0.5,0)
    \psdots[dotstyle=triangle,linecolor=darkgreen,dotsize=5pt](0.25,0.1)
    \psline[linestyle=dashed,dash=2pt 1pt 0.5pt 1pt,linewidth=0.03,linecolor=darkgreen](0,0.1)(0.5,0.1)
    \end{pspicture}}    
\newbox{\LegendeWthree}
\savebox{\LegendeWthree}{
    \begin{pspicture}(0,0)(0.5,0)
    \psdots[dotstyle=triangle,linecolor=darkgreen,dotsize=5pt](0.25,0.1)
    \psline[linestyle=dashed,dash=2pt 1pt,linewidth=0.03,linecolor=darkgreen](0,0.1)(0.5,0.1)
    \end{pspicture}}
\newbox{\LegendeIWtwo}
\savebox{\LegendeIWtwo}{
    \begin{pspicture}(0,0)(0.5,0)
    \psdots[dotstyle=triangle,linecolor=black,dotsize=5pt](0.25,0.1)
    \psline[linestyle=dashed,dash=2pt 1pt 0.5pt 1pt,linewidth=0.03,linecolor=black](0,0.1)(0.5,0.1)
    \end{pspicture}}    
\newbox{\LegendeIWthree}
\savebox{\LegendeIWthree}{
    \begin{pspicture}(0,0)(0.5,0)
    \psdots[dotstyle=triangle,linecolor=black,dotsize=5pt](0.25,0.1)
    \psline[linestyle=dashed,dash=2pt 1pt,linewidth=0.03,linecolor=black](0,0.1)(0.5,0.1)
    \end{pspicture}}
\journal{Elsevier}
\begin{document}

\begin{frontmatter}

%% Title, authors and addresses

%% use the tnoteref command within \title for footnotes;
%% use the tnotetext command for the associated footnote;
%% use the fnref command within \author or \address for footnotes;
%% use the fntext command for the associated footnote;
%% use the corref command within \author for corresponding author footnotes;
%% use the cortext command for the associated footnote;
%% use the ead command for the email address,
%% and the form \ead[url] for the home page:
%%
%% \title{Title\tnoteref{label1}}
%% \tnotetext[label1]{}
%% \author{Name\corref{cor1}\fnref{label2}}
%% \ead{email address}
%% \ead[url]{home page}
%% \fntext[label2]{}
%% \cortext[cor1]{}
%% \address{Address\fnref{label3}}
%% \fntext[label3]{}

\title{A weighted $\ell_1$-minimization approach for sparse polynomial chaos expansions}

%% use optional labels to link authors explicitly to addresses:
\author[label1]{Ji Peng}
\author[label2]{Jerrad Hampton}
\author[label2]{Alireza Doostan\corref{cor1}}
\ead{alireza.doostan@colorado.edu}

\cortext[cor1]{Corresponding Author: Alireza Doostan}

\address[label1]{Mechanical Engineering Department, University of Colorado, Boulder, CO 80309, USA}
\address[label2]{Aerospace Engineering Sciences Department, University of Colorado, Boulder, CO 80309, USA}

%%%%%%%%%%%%%%%%%%%%%%%%%%%%%%%%%%%%%%%%%%%%%%%%%%%%%
%
\begin{abstract}
%
%%%%%%%%%%%%%%%%%%%%%%%%%%%%%%%%%%%%%%%%%%%%%%%%%%%%%

This work proposes a method for sparse polynomial chaos (PC) approximation of high-dimensional stochastic functions based on non-adapted random sampling. We modify the standard $\ell_1$-minimization algorithm, originally proposed in the context of compressive sampling, using {\it a priori} information about the decay of the PC coefficients and refer to the resulting algorithm as {\it weighted $\ell_1$-minimization}. We provide conditions under which we may guarantee recovery using this weighted scheme. Numerical tests are used to compare the weighted and non-weighted methods for the recovery of solutions to two differential equations with high-dimensional random inputs: a boundary value problem with a random elliptic operator and a $2$-D thermally driven cavity flow with random boundary condition.

\end{abstract}

\begin{keyword}
Compressive sampling \sep Sparse approximation \sep Polynomial chaos \sep Basis pursuit denoising (BPDN) \sep Weighted $\ell_1$-minimization \sep Uncertainty quantification  \sep Stochastic PDEs
%% keywords here, in the form: keyword \sep keyword
%% MSC codes here, in the form: \MSC code \sep code
%% or \MSC[2008] code \sep code (2000 is the default)
\end{keyword}
\end{frontmatter}

%%
%% Start line numbering here if you want
%%
%\clearpage
%\tableofcontents
%\linenumbers
%% main text

%%%%%%%%%%%%%%%%%%%%%%%%%%%%%%%%%%%%%%%%%%%%%%%%%%%%%
%
\section{Introduction}
\label{sec:intro}
%
%%%%%%%%%%%%%%%%%%%%%%%%%%%%%%%%%%%%%%%%%%%%%%%%%%%%%

As we analyze engineering systems of increasing complexity, we must strategically confront the imperfect knowledge of the underlying physical models and their inputs, as well as the implied imperfect knowledge of a quantity of interest (QOI) predicted from these models. The understanding of outputs as a function of inputs in the presence of such uncertainty falls within the field of uncertainty quantification. The accurate quantification of the uncertainty of the QOI allows for the rigorous mitigation of both unfounded confidence and unnecessary diffidence in the anticipated QOI. %{\color{red}[not sure what is meant by ``unfounded confidence'']}{\color{blue}[Here I want to caution against an inaccurate understanding of the QOI leading to unfounded trust in a low variability of the QOI. I felt that the word play between confidence and diffidence was nice.]}

Probability is a natural mathematical framework for describing uncertainty, and so we assume that the system input is described by a vector of independent random variables, $\bm{\Xi}$. If the random variable QOI, denoted by $u(\bm{\Xi})$, has finite variance, then the polynomial chaos (PC) expansion~\cite{Ghanem02, Xiu02} is given in terms of the orthonormal polynomials $\{\psi_j(\bm{\Xi})\}$ as
\begin{align}
\label{eq:PCE}
u(\bm{\Xi})=\sum_{j=1}^\infty c_j\psi_j(\bm{\Xi}).
\end{align}
A more detailed exposition on the use of PC expansion in this work is given in Section~\ref{subsubsec:PCE}.

To identify the PC coefficients, $c_j$ in~(\ref{eq:PCE}), sampling methods including Monte Carlo simulation~\cite{Reagan03}, pseudo-spectral stochastic collocation~\cite{Mathelin03,Xiu05a,Babuska07a,Constantine12a}, or least-squares regression~\cite{Hosder06} may be applied. These methods for evaluating the PC coefficients are popular in that deterministic solvers for the QOI may be used without being adapted to the probability space. However, the standard Monte Carlo approach suffers from a slow convergence rate. Additionally, a major limitation to the use of the last two approaches above is that the number of samples needed to approximate $c_j$ increases exponentially with the dimension of the input uncertainty, i.e., the number of random variables needed to describe the input uncertainty, see, e.g., \cite{LeMaitre10,Xiu10a,Doostan07b,Doostan09,Doostan2013a}. In this work, we use the Monte Carlo sampling method while considerably improving the accuracy of approximated PC coefficients (for the same number of samples) by exploiting the approximate sparsity of the coefficients $c_j$. As $u$ has finite variance, the $c_j$ in (\ref{eq:PCE}) necessarily converge to zero, and if this convergence is sufficiently rapid, then $u(\bm{\Xi})$ may be approximated by
\begin{align}
\label{eq:PCEt}
\hat{u}(\bm{\Xi})=\sum_{j\in\mathcal{C}}c_j\psi_j(\bm{\Xi}),
\end{align}
where the index set $\mathcal{C}$ has few elements. %{\color{red}[here we may want to be more precise as $S\ll P$ (not $S\ll N$) is the sparsity but we haven't defined $P$]}{\color{blue}[The appropriate language has been changed.]}. 
When this occurs we say that $\hat{u}$ is reconstructed from a sparse PC expansion, and that $u$ admits an approximately sparse PC representation. By truncating the PC basis implied by (\ref{eq:PCE}) to $P$ elements, we may perform calculations on the truncated PC basis.  %{\color{red}[I think this could be shifted to before where we define sparsity]}{\color{blue}[Moved]}. 
If we let $\bm{c}$ be a vector of $c_j$, for $j=1,\dots,P$, then the approximate sparsity of the QOI (implied by the sparsity of $\bm c$) and the practical advantage of representing the QOI with a small number of basis functions motivate a search for an approximate $\bm{c}$ %{\color{red}[need to define $\bm c$]}{\color{blue}[Added]} 
which has few non-zero entries~\cite{Doostan10b,Blatman10,Doostan11a,Blatman11,Mathelin12a,Yan12,Yang13}. We seek to achieve an accurate reconstruction with a small number of samples, and so look to techniques from the field of compressive sampling~\cite{Chen98, Chen01,Donoho06b, Candes06a, Candes06b, Candes06c, Candes07a,Candes08b,Bruckstein09}.

Let $\bm{\xi}$ represent a realization of $\bm{\Xi}$. We define $\bm{\Psi}$ as the matrix where each row corresponds to the row vector of $P$ PC basis functions evaluated at sampled $\bm{\xi}$ with the corresponding $u(\bm{\xi})$ being an entry in the vector $\bm{u}$. We assume $N<P$ samples of $\bm{\xi}$, so that $\bm{\Psi}$ is $N\times P$, $\bm{c}$ is $P\times 1$, and $\bm{u}$ is $N\times 1$. Compressive sampling seeks a solution $\bm c$ with minimum number of non-zero entries by solving the optimization problem
\begin{equation}
\label{eq:CS_l0}
\mathcal{P}_{0,\epsilon}\equiv\{\mathop{\arg\min}\limits_{\bm{c}}\Vert\bm{c}\Vert_0 : \Vert\bm{\Psi c}-\bm{u}\Vert_2\leqslant\epsilon\}.
\end{equation}
Here $\Vert\bm{c}\Vert_0$ is defined as the number of non-zero entries of $\bm c$, and a solution to $\mathcal{P}_{0,\epsilon}$ directly provides an optimally sparse approximation in that a minimal number of non-zero entries are used to recover $\bm{u}$ to within $\epsilon$ in the $\ell_2$ norm. In general, the cost of finding a solution to $\mathcal{P}_{0,\epsilon}$ grows exponentially in $P$ \cite{Bruckstein09}. To resolve this exponential dependence, the convex relaxation of $\mathcal{P}_{0,\epsilon}$ based on $\ell_1$-minimization, also referred to as basis pursuit denoising (BPDN), has been proposed \cite{Chen98,Chen01,Candes06a,Donoho06b,Bruckstein09}. Specifically, BPDN seeks to identify $\bm{c}$ by solving
\begin{equation}
\label{eq:CS}
\mathcal{P}_{1,\epsilon}\equiv\{\mathop{\arg\min}\limits_{\bm{c}}\Vert\bm{c}\Vert_1 : \Vert\bm{\Psi c}-\bm{u}\Vert_2\leqslant\epsilon\}
\end{equation}
using convex optimization algorithms \cite{Chen98,Osborne00,Daubechies04,Combettes05,Figueiredo07,Kim07a,spgl1:2007,Donoho09a}. In practice, $\mathcal{P}_{0,\epsilon}$ and $\mathcal{P}_{1,\epsilon}$ may have similar solutions, and the comparison of the two problems has received significant study, see, e.g.,~\cite{Bruckstein09} and the references therein. %{\color{red}[the way we use $\bm\Xi$ and $\bm\xi$, it may not be easy to see how $\bm \Psi$ is set. additionally we may want to highlight the dimensions of $\bm\Psi$ and $\bm u$ in (\ref{eq:CS}) and that $N< P$].}{\color{blue}[I added the dimensions of the players here.]}

Note in (\ref{eq:CS}) the constraint $\Vert\bm{\Psi c}-\bm{u}\Vert_2\leqslant\epsilon$ depends on the observed $\bm{\xi}$ and $u(\bm{\xi})$; not in general $\bm{\Xi}$ and $u(\bm{\Xi})$. As a result, $\bm{c}$ may be chosen to fit the input data, and not accurately approximate $u(\bm{\Xi})$ for previously unobserved realizations $\bm{\xi}$. To avoid this situation, we determine $\epsilon$ by cross-validation~\cite{Doostan11a} as discussed in Section~\ref{sec:cross_validation}.

To assist in identifying a solution to (\ref{eq:CS}), note that for certain classes of functions, theoretical analysis suggests estimates on the decay for the magnitude of the PC coefficients~\cite{Babuska04,Cohen10a,Beck12a}. Alternatively, as we shall see in Section \ref{sec:L1_example_cavity}, such estimates may be derived by taking into account certain relations among physical variables in a problem. It is reasonable to use this {\it a priori} information to improve the accuracy of sparse approximations~\cite{Escoda06}. Moreover, even if this decay information is unavailable, each approximated set of PC coefficients may be considered as an initialization for the calculation of an improved approximation, suggesting an iterative scheme~\cite{Candes08a, Escoda06,Chartrand08,Khajehnejad10,Mathelin12a,Yang13}.

In this work, we explore the use of {\it a priori} knowledge of the PC coefficients as a weighting of $\ell_1$ norm in BPDN in what is referred to as {\it weighted $\ell_1$-minimization} (or {\it weighted BPDN}),
\begin{equation}
\label{eq:CS_w}
\mathcal{P}_{1,\epsilon}^{(\bm W)}\equiv\{\mathop{\arg\min}\limits_{\bm{c}}\Vert\bm{Wc}\Vert_1 : \Vert\bm{\Psi c}-\bm{u}\Vert_2\leqslant\epsilon\},
\end{equation}
where $\bm W$ is a diagonal matrix to be specified. Previously, $\ell_1$-minimization has been applied to solutions of stochastic partial differential equations with approximately sparse $\bm{c}$~\cite{Doostan10b,Doostan11a,Mathelin12a,Yang13}, but these approximately sparse $\bm{c}$ include a number of small magnitude entries which inhibit the accurate recovery of larger magnitude entries. The primary goal of this work is to utilize {\it a priori} information about $\bm{c}$, in the form of estimates on the decay of its entries, to reduce this inhibition and enhance the recovery of a larger proportion of PC coefficients; in particular those of the largest magnitude. We provide theoretical results pertaining to the quality of the solution identified from the weighted $\ell_1$-minimization problem (\ref{eq:CS_w}). 

The rest of this paper is structured as follows. In Section \ref{sec:prob_set}, we introduce the problem of interest as well as our approach for the stochastic expansion of its solution. Following that, in Section \ref{sec:l1}, we present our results on weighted $\ell_1$-minimization and its corresponding analysis for sparse PC expansions. In Section \ref{sec:examples}, we provide two test cases which we use to describe the specification of the weighted $\ell_1$-minimization problem and explore its performance and accuracy. In particular, in Section \ref{sec:L1_example_cavity}, we utilize a simple dimensional relation to derive approximate upper bounds on the PC expansion coefficients of the velocity field in a flow problem.

%%%%%%%%%%%%%%%%%%%%%%%%%%%%%%%%%%%%
%
\section{Problem Statement and Solution Approach}
\label{sec:prob_set}
%
%%%%%%%%%%%%%%%%%%%%%%%%%%%%%%%%%%%%

%
\subsection{PDE formulation}
\label{subsubsec:PDE}

Let the random vector $\bm{\Xi}$, defined on the probability space $(\Omega,\mathcal{F},\mathbb{P})$, characterize the input uncertainties and consider the solution of a partial differential equation defined on a bounded Lipschitz continuous domain $\mathcal{D}\subset\mathbb{R}^{D}$, $D\in\{1,~2,~3\}$, with boundary $\partial\mathcal{D}$. The uncertainty implied by $\bm{\Xi}$ may be represented in one or many relevant parameters, e.g., the diffusion coefficient, boundary conditions, and/or initial conditions. Letting $\mathcal{L},\mathcal{I},$ and $\mathcal{B}$ depend on the physics of the problem being solved, the solution $u$ satisfies the three constraints
\begin{equation}
\label{eqn:PDE_operator}
\begin{aligned}
&\mathcal{L}(\bm{x},t,\bm{\Xi};u(t,\bm{x},\bm{\Xi}))=0,~~& &\bm{x}\in\mathcal{D}, \\
&\mathcal{I}(\bm{x},\bm{\Xi};u(0,\bm{x},\bm{\Xi}))=0,~~& &\bm{x}\in\mathcal{D}, \\
&\mathcal{B}(\bm{x},t,\bm{\Xi};u(t,\bm{x},\bm{\Xi}))=0,~~& &\bm{x}\in\partial\mathcal{D}.
\end{aligned}
\end{equation}

We assume that $(\Omega,\mathcal{F},\mathbb{P})$ is formed by the product of $d$ probability spaces, $(\mathbb{R},\mathbb{B}(\mathbb{R}),\mathbb{P}_k)$ corresponding to each coordinate of $\bm{\Xi}$, denoted by $\Xi_k$; here $\mathbb{B}(\cdot)$ represents the Borel $\sigma$-algebra. We further assume that the random variable $\Xi_k$ is continuous and distributed according to the density $\rho_k$ implied by $\mathbb{P}_k$. Note that this entails $\Omega=\mathbb{R}^d$, $\mathcal{F}=\mathbb{B}(\mathbb{R}^d)$, that each $\Xi_k$ is independently distributed, and that the joint distribution for $\bm{\Xi}$, denoted by $\rho$, equals the tensor product of the marginal distributions $\{\rho_k\}$.

In this work, we assume that conditioned on the $i$th random realization of $\bm{\Xi}$, denoted by $\bm{\xi}^{(i)}$, the numerical solution to (\ref{eqn:PDE_operator}) may be calculated by a fixed solver; for our examples we use the finite element solver package FEniCS~\cite{LoggMardalEtAl2012a}. For any fixed $\bm{x}_0,t_0$, our objective is to reconstruct the solution $u(\bm{x}_0,t_0,\bm{\Xi})$ using $N$ realizations $\{u(\bm{x}_0,t_0,\bm{\xi}^{(i)})\}$. For brevity we suppress the dependence of $u(\bm{x}_0,t_0,\bm{\Xi})$ and $\{u(\bm{x}_0,t_0,\bm{\xi}^{(i)})\}$ on $\bm{x}_0$ and $t_0$.

The two specific physical problems we consider are a boundary value problem with a random elliptic operator and a $2$-D heat driven cavity flow with a random boundary condition. 

\subsection{Polynomial Chaos (PC) expansion}
\label{subsubsec:PCE}

Our methods to approximate the solution $u$ to (\ref{eqn:PDE_operator}) make use of the PC basis functions which are induced by the probability space $(\Omega,\mathcal{F},\mathbb{P})$ on which $\bm{\Xi}$ is defined. Specifically, for each $\rho_k$ we define $\{\psi_{k,j}\}_{j\ge 0}$ to be the complete set of orthonormal polynomials of degree $j$ with respect to the weight function $\rho_k$ \cite{Askey85,Xiu02}. As a result, the orthonormal polynomials for $\bm{\Xi}$ are given by the products of the univariate orthonormal polynomials,
\begin{equation}
\label{eqn:PCProdDef}
\psi_{\bm{\alpha}}(\bm{\Xi})=\mathop{\prod}\limits_{k=1}^d\psi_{k,\alpha_k}(\Xi_k),
\end{equation}
where each $\alpha_k$, representing the $k$th coordinate of the multi-index $\bm{\alpha}$, is a non-negative integer. For computation, we truncate the expansion in (\ref{eq:PCE}) to the set of $P$ basis functions associated with the subspace of polynomials of total order not greater than $q$, that is $\sum_{k=1}^d\alpha_k\le q$. For convenience, we also order these $P$ basis functions so that they are indexed by $\{1,\cdots,P\}$ as opposed to the vectorized indexing in (\ref {eqn:PCProdDef}). The basis set $\{\psi_{j}\}_{j=1}^P$ has the cardinality
\begin{equation}
\label{eqn:P}
P = \frac{(d+q)!}{d!q!}.
\end{equation}
For the interest of presentation, we interchangeably use both notations for representing PC basis. For any fixed $\bm{x}_0,t_0$, the PC expansion of $u$ and its truncation are then defined by
\begin{equation}
\label{eqn:PCE}
u(\bm{x}_0,t_0,\bm{\Xi}) = u(\bm{\Xi}) = \sum_{j=1}^{\infty}c_j\psi_j(\bm{\Xi})\approx \sum_{j=1}^{P}c_j\psi_j(\bm{\Xi}).
\end{equation}
Tough $u$ is an arbitrary function in $L_2(\Omega,\mathbb{P})$, we are limited to an approximation in the span of our basis polynomials, and the error incurred from this approximation is referred as {\it truncation error}.
%Though $u$ may depend on arbitrary powers of $\bm\Xi$

In this work we assume that, for each $k$, $\rho_k$ is known {\it a priori}.  Two commonly used probability densities for $\rho_k$ are uniform and Gaussian; the corresponding polynomial bases are, respectively, Legendre and Hermite polynomials~\cite{Xiu02}. We furthermore set $\Xi_k$ to be uniformly distributed on $[-1,1]$ and our PC basis functions are constructed from the orthonormal Legendre polynomials. The presented methods, however, may be applied to any set of orthonormal polynomials and their associated random variables.

We use the samples $\bm{\xi}^{(i)}$, $i=1,\dots,N$,  of $\bm{\Xi}$ to evaluate the PC basis and identify a corresponding solution $u(\bm{\xi}^{(i)})$ to (\ref{eqn:PDE_operator}). This evaluated PC basis forms a row of $\bm{\Psi}\in \mathbb{R}^{N\times P}$ in (\ref{eq:CS}), that is $\bm{\Psi}(i,j)=\psi_{j}(\bm{\xi}^{(i)})$. The corresponding solution $u(\bm{\xi}^{(i)})$ is the associated element of the vector $\bm{u}$. We are then faced with identifying the vector of PC coefficients $\bm c\in\mathbb{R}^P$ in (\ref{eqn:PCE}), which we address by considering techniques from compressive sampling.

\subsection{Sparse PC expansion}
\label{subsec:CS}

As the PC expansion in (\ref{eqn:PCE}) is a sum of orthonormal random variables defined by $\psi_j(\bm{\Xi})$, the exact PC coefficients may be computed by projecting $u(\bm{\Xi})$ onto the basis functions $\psi_j(\bm{\Xi})$ such that
\begin{equation*}
c_j=\mathbb{E}\left[u(\bm{\Xi})\psi_j(\bm{\Xi})\right]=\int_\Omega u(\bm{\xi})\psi_j(\bm{\xi})\rho(\bm{\xi})d\bm{\xi}.
\end{equation*}
To compute the PC coefficients non-intrusively, besides the standard Monte Carlo sampling, which is known to converge slowly, we may estimate this expectation via, for instance, sparse grid quadrature. While this latter approach performs well when $d$ and $q$ are small, it may become impractical for high-dimensional random inputs. Alternatively, $\bm c$ may be computed from a discrete projection, e.g., least-squares regression \cite{Hosder06}, which generally requires $N>P$ solution realizations to achieve a stable approximation. 

We assume that $\bm{c}$ is approximately sparse, and seek to identify an appropriate $\mathcal{C}$, as in (\ref{eq:PCEt}), having a small number of elements and giving a small truncation error. To this end we extend ideas from the field of compressive sampling. If the number of elements of $\mathcal{C}$, denoted by $|\mathcal{C}|$, is small, then using only the columns in $\bm{\Psi}$ corresponding to elements of $\mathcal{C}$ reduces the dimension of the PC basis from $P$ to $|\mathcal{C}|$. This significantly reduces the number of PC coefficients requiring estimation and consequently the number of solution realizations $N$. We define $\bm{\Psi}_{\mathcal{C}}$ as the truncation of $\bm{\Psi}$ to those columns only relevant to the basis functions of $\mathcal{C}$, and similarly define $\bm{c}_{\mathcal{C}}$ as the truncation of $\bm{c}$. If $|\mathcal{C}|<N$, then the determination of $|\mathcal{C}|$ coefficients gives an optimization problem less prone to overfit the data \cite{Hansen98b}, even when $N<P$. For example, the least-squares approximation of $\bm{c}_\mathcal{C}$, $\hat{\bm{c}}_\mathcal{C}=(\bm{\Psi}_\mathcal{C}^{T}\bm{\Psi}_\mathcal{C})^{-1}\bm{\Psi}_\mathcal{C}^{T}\bm{u}$, minimizing $\|\bm{\Psi}_\mathcal{C}\bm{c}_\mathcal{C}-\bm{u}\|_2$ is well-posed and will have a unique solution if $\bm{\Psi}_\mathcal{C}$ is of full rank.

Note that the identification of $\mathcal{C}$ is critical to the optimization problem $\mathcal{P}_{0,\epsilon}$ in (\ref{eq:CS}). If we instead have a solution to $\mathcal{P}_{1,\epsilon}$, then we may infer a $\mathcal{C}$ by noting the entries of the approximated $\bm{c}$ which have magnitudes above a certain threshold. %, and in practice solutions to $\mathcal{P}_{1,\epsilon}$ are often sparse {\color{red}[cite]}.
{\color{black}Motivated to obtain more accurate sparse solutions, we next introduce a compressive sampling technique which modifies $\mathcal{P}_{1,\epsilon}$ by weighting each $c_j$ differently in $\Vert \bm c\Vert_{1}$. As we shall discuss later, these weights are generated based on some {\it a priori} information on the decay of $c_j$, when available.} 

%We next deliberate on the above-mentioned sparse approximations. 

%The method of BPDN is explained in Section~\ref{sec:l1}. We are able to calibrate our reliance on the {\it a priori} knowledge on $\bm{c}$, if available, by a parameter $p$ such that $p=0$ corresponds to full reliance on the data while $p=\infty$ corresponds to full reliance on the anticipated $\bm{c}$. We delineate on this approach in Section~\ref{subsec:wl1_intro}.

%%%%%%%%%%%%%%%%%%%%%%%%%%%%%%%%%%%%%%%%%%%%%%%%%%%%%
%
\section{\texorpdfstring{Weighted $\ell_1$-minimization}{BPDN and weighted BPDN}}
\label{sec:l1}
%
%%%%%%%%%%%%%%%%%%%%%%%%%%%%%%%%%%%%%%%%%%%%%%%%%%%%%

%%
%%
%\subsection{Introduction to BPDN and weighted BPDN}
%\label{subsec:wl1_intro}
%%
%%
% {\color{blue}Needs some intro to l1-minimization}\\
%In contrast to the optimization problem $\mathcal{P}_{0,\epsilon}$ in (\ref{eq:CS}), $\mathcal{P}_{1,\epsilon}$ may be solved by $\ell_1$-minimization, see, e.g.,~\cite{Chen98}. 

To develop a weighted $\ell_1$-minimization $\mathcal{P}^{(\bm{W})}_{1,\epsilon}$, we do not consider any changes to the algorithm solving $\mathcal{P}_{1,\epsilon}$, but instead transform the problem with the use of weights, such that the same solver may be used. We define the diagonal weight matrix $\bm{W}$, with diagonal entries $w_j\ge 0$, and consider the new weighted problem $\mathcal{P}^{(\bm{W})}_{1,\epsilon}$ in (\ref{eq:CS_w}) with 
\begin{equation}
\label{eqn:weighted_l1_norm}
\Vert\bm{W}\bm{c}\Vert_1=\sum_{j=1}^{P}w_j\vert c_j\vert.
\end{equation}
%
%
%%
%\begin{equation*}
%\mathcal{P}^{(\bm{W})}_{1,\epsilon}\equiv\{\mathop{\arg\min}\limits_{\bm{c}}\Vert\bm{W}\bm{c}\Vert_1 : \Vert\bm{\Psi c}-\bm{u}\Vert_2\leqslant\epsilon\}.
%\end{equation*}
%%

If {\it a priori} information is available for $c_j$, it is natural to use it to define $\bm{W}$~\cite{Escoda06}. Heuristically, columns with large anticipated $|c_j|$ should not be heavily penalized when used in the approximation, that is the corresponding $w_j$ should be small. In contrast, $|c_j|$ which are not expected to be large should be paired with large $w_j$. This suggests allowing $w_j$ to be inversely related to $|c_j|$, \cite{Candes08a},
\begin{equation}
\label{eqn:weight_norm1}
w_j=\left\{\begin{aligned}
&\vert c_{j}\vert^{-p},& c_j\neq0,\\
&\infty ,& c_j=0.
\end{aligned}
\right.
\end{equation}
The parameter $p\in [0,1]$ may be used to account for the confidence in the anticipated $|c_j|$. Large values of $p$ lead to more widely dispersed weights and indicate greater confidence in these $|c_j|$ while small values lead to more clustered weights and indicate less confidence in these $|c_j|$. These weights deform the $\ell_1$ ball, as Fig. \ref{fig:schematic} shows, to discourage small coefficients from the solution and consequently enhance the accuracy. A detailed discussion of weighted $\ell_1$-minimization and examples in signal processing are given in~\cite{Candes08a}.

%\begin{figure}[H]
%  \centering
%  \subfloat[]{\label{fig:schematic:standard}\includegraphics[trim = 0mm 5mm 0mm 10mm, clip, width=0.35\textwidth]{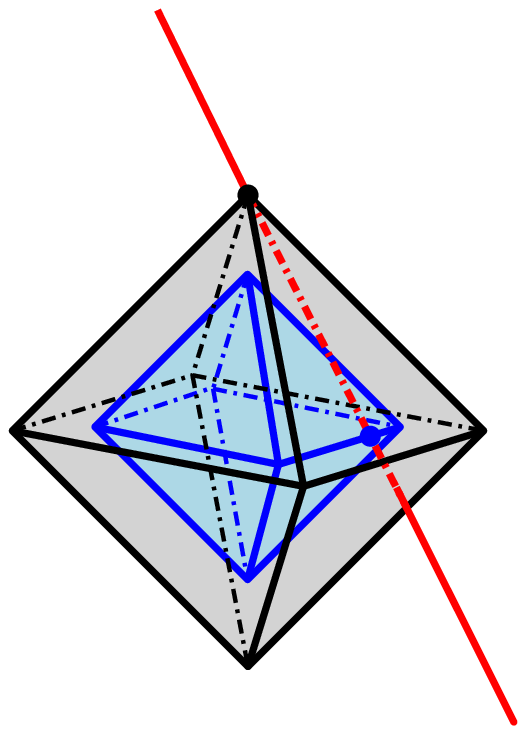}}
%  \subfloat[]{\label{fig:schematic:reweighted}\includegraphics[trim = 0mm 5mm 0mm 10mm, clip, width=0.25\textwidth]{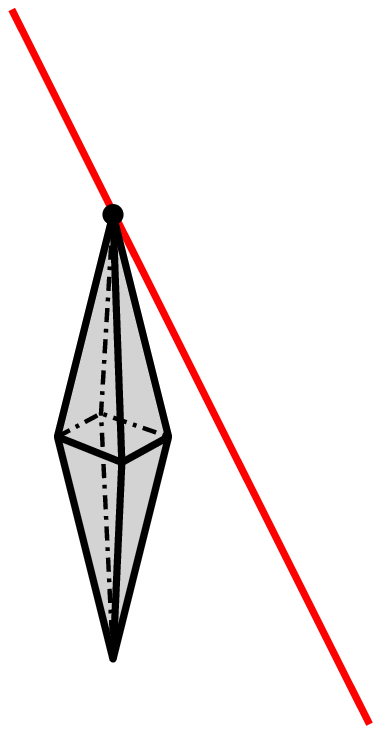}}
%   \caption{Schematic of how weighted $\ell_1$-minimization may improve the accuracy of the sparse approximation. (a) Standard $\ell_1$-minimization where, depending on $\bm\Psi$, the problem $\mathcal{P}_{1,0}$ with $\bm{u} = \bm{\Psi}\bm{c}_0$ may have a solution $\bm{c}$ such that $\Vert\bm{c}\Vert_1\le \Vert\bm{c}_0\Vert_1$. (b) Standard $\ell_1$-minimization for which there is no $\bm{c}$ with $\Vert\bm{Wc}\Vert_1\le \Vert\bm{W}\bm{c}_0\Vert_1$.}
%  \label{fig:schematic}
%\end{figure}

%
\begin{figure}[htb]

 
    \centering
    \begin{tabular}{cc}
    \includegraphics[trim = 0mm 5mm 0mm 10mm, clip, width=0.35\textwidth]{standard.eps}  
    \put(-10,-10){\footnotesize $\bm{\Psi c} = \bm{\Psi}\bm{c}_0$}
    \put(-67,130){\footnotesize $\bm{c}_0$}
    \put(-50,64){\footnotesize $\bm{c}$}    
    &
    \hspace{2cm}    
    \includegraphics[trim = 0mm 5mm 0mm 12mm, clip, width=0.26\textwidth]{reweighted.eps}
    \put(-10,-10){\footnotesize $\bm{\Psi c} = \bm{\Psi}\bm{c}_0$}
    \put(-67,128){\footnotesize $\bm{c}_0$, $\bm{c}$}
    \\
    (a) & \hspace{1.5cm} (b)
      
%              \put(85,0){\footnotesize(a)}
%                         \\
     \end{tabular}
      \caption{Schematic of approximation of a sparse $\bm c_0\in\mathbb{R}^{3}$ via standard and weighted $\ell_1$-minimization (based on \cite{Candes08a}). (a) Standard $\ell_1$-minimization where, depending on $\bm\Psi$, the problem $\mathcal{P}_{1,0}$ with $\bm{u} = \bm{\Psi}\bm{c}_0$ may have a solution $\bm{c}$ such that $\Vert\bm{c}\Vert_1\le \Vert\bm{c}_0\Vert_1$. (b) Weighted $\ell_1$-minimization for which there is no $\bm{c}$ with $\Vert\bm{Wc}\Vert_1\le \Vert\bm{W}\bm{c}_0\Vert_1$.}            
\label{fig:schematic}       
\end{figure}

As in~\cite{Chartrand08, Candes08a}, to insure stability, we consider a damped version of $w_j$ in (\ref{eqn:weight_norm1}), 
\begin{equation}
\label{eqn:weight}
w_j = \left(\vert c_j\vert + \epsilon_w\right)^{-p},
\end{equation}
where $\epsilon_w$ is a relatively small positive parameter. In the numerical examples of this paper, we set $\epsilon_w=5\times 10^{-5}\cdot \hat{c}_1$ to generate $w_j$ in $\mathcal{P}^{(\bm{W})}_{1,\epsilon}$, where $\hat{c}_1 = \frac{1}{N}\sum_{i=1}^{N}u(\bm{\xi}^{(i)})$ is the Monte Carlo estimate of the degree zero PC coefficient (or, equivalently, the sample average of $u$).  

\begin{remark}[Choice of $p$ in (\ref{eqn:weight})] When defined based on the exact values $\vert c_j\vert$, the weights $w_j$ in (\ref{eqn:weight}) together with (\ref{eqn:weighted_l1_norm}) imply an $\ell_r$-minimization problem of the form $\mathcal{P}_{r,\epsilon}\equiv\{\mathop{\arg\min}\limits_{\bm{c}}\Vert\bm{c}\Vert_r : \Vert\bm{\Psi c}-\bm{u}\Vert_2\leqslant\epsilon\}$ to solve for $\bm{c}$, where $r=1-p\in[0,1]$. Depending on the value of $r$, such a minimization problem may outperform the standard $\ell_1$-minimization, see, e.g., \cite{Chartrand08}. In practice, however, an optimal selection of $r$ (or $p$) is not a trivial task and necessitates further analysis. In the present study, similar to \cite{Candes08a}, we choose $p=1$. 

\end{remark}

\subsection{Setting weights $w_j$}
\label{sec:setting_weights}

As the true $\bm{c}$ is unknown, an approximation of $\bm{c}$ must be employed to form the weights. In~\cite{Chartrand08,Candes08a,Needell09,Yang13} an iterative approach is proposed wherein these weights are computed from the previous approximation of $\bm{c}$. More precisely, at iteration $l+1$, the weights are set by 
\begin{equation*}
w_j = \left(\vert \hat c_j^{(l)}\vert + \epsilon_w\right)^{-1},
\end{equation*}
where $\hat c_j^{(l)}$ is the estimate of $c_j$ obtained from $\mathcal{P}^{(\bm{W})}_{1,\epsilon}$ at iteration $l$ and $w_j=1$ at iteration $l=1$. However, the solution to such iteratively re-weighted $\ell_1$-minimization problems may be expensive due to the need for multiple $\mathcal{P}^{(\bm{W})}_{1,\epsilon}$ solves. Additionally, the convergence of the iterates is not always guaranteed \cite{Candes08a}. Moreover, as we will observe from the results of Section \ref{sec:examples}, unlike the weighted $\ell_1$-minimization, the accuracies obtained from the iteratively re-weighted $\ell_1$-minimization approach are sensitive to the choice of $\epsilon_w$. In particular, for relatively large or small values of $\epsilon_w$, the iteratively re-weighted  $\ell_1$-minimization may even lead to less accurate results as compared to the standard $\ell_1$-minimization.

Alternatively, to set $w_j$, we here focus our attention on situations where {\it a priori} knowledge on $c_j$ in the form of decay of $\vert c_j\vert$ are available. This includes primarily a class of linear elliptic PDEs with random inputs \cite{Babuska04,Cohen10a,Beck12a}. We also provide preliminary results on a non-linear problem, specifically a $2$-D Navier-Stokes equation, for which we exploit a simple physical dependency among solution variables to generate the approximate decay of $\vert c_j\vert$. We notice that the success of our weighted $\ell_1$-minimization depends on the ability of our approximate $\vert c_j \vert$ to reveal {\it relative importance} of $\vert c_j\vert$ rather than their precise values. As we shall empirically illustrate in Section \ref{sec:examples}, when such decay information is used, the weighted $\ell_1$-minimization approach outperforms the iteratively re-weighted $\ell_1$-minimization. 

To solve $\mathcal{P}^{(\bm{W})}_{1,\epsilon}$, the standard $\ell_1$-minimization solvers may be used. In this work we use the MATLAB package SPGL1~\cite{spgl1:2007} based on the spectral projected gradient algorithm~\cite{Berg08}. Specifically, $\tilde{\bm{c}} = \bm{W}\bm{c}$ may be solved from $\mathcal{P}_{1,\epsilon}$ with the modified measurement matrix $\tilde{\bm{\Psi}} = \bm{\Psi}\bm{W}^{-1}$. We then set $\bm{c} = \bm{W}^{-1}\tilde{\bm{c}}$. 

We defer presenting examples of setting $w_j$ to Section \ref{sec:examples} and instead provide theoretical analysis on the quality of the solution to the weighted $\ell_1$-minimization problem $\mathcal{P}^{(\bm{W})}_{1,\epsilon}$. In particular, we limit our theoretical analysis to determining if  $\mathcal{P}^{(\bm{W})}_{1,\epsilon}$ is equivalent to solving $\mathcal{P}_{0,\epsilon}$, finding an optimally sparse solution $\bm{c}$.

\subsection{Theoretical recovery via weighted $\ell_1$-minimization}
\label{subsec:wl1_theory}

Following the ideas of \cite{Candes06d,Juditsky11a,Juditsky11b,Gribonval03,Donoho06b,Cohen09a}, we consider analysis which depends on vectors in the kernel of $\bm{\Psi}$. We consider $\bm{c}_0$ to be a sparse approximation, such that $\bm{\Psi}\bm{c}_0 +\bm{e}= \bm{u}$ where $\|\bm{e}\|_2\le \epsilon$ indicates a small level of truncation error and/or noise is present, implying that exact reconstructions are themselves approximated by a sparse solution. Stated another way, $\bm{c}_0$ is a solution to $\mathcal{P}_{0,\epsilon}$. Let $\bm{c}_1$ be a solution to $\mathcal{P}^{(\bm{W})}_{1,\epsilon}$. Further, let $\mathcal{C}=\mbox{Supp}(\bm{c}_0)$, and note that $s = |\mathcal{C}|$ is the sparsity of $\bm{c}_0$. 

%*** Jerrad: Please add a sentence or two on relaxing the assumption that $\bm c$ is exactly sparse (the notion of best s term approximation). 

%&&& Addressed, and I also removed the dependence of the problem on $\bm{u}$ or $\bm{u}_1$ here.

%*** Jerrad: Let us double check the statement related to $\mathcal{P}_{0,\epsilon}$ below.

%&&& I changed the |\mathcal{C}|<s to \le so as to be more accurate and consistent. I also changed mentions of $\bm{c}$ to $\bm{c}_0$ as this is more accurate with our notion of $\bm{c}_0$ as a sprase approximation or best $s$ term approximation.

The following theorem is closely related to Theorem 1 of \cite{Candes06d} and provides a condition to compare a solution to $\mathcal{P}^{(\bm{W})}_{1,\epsilon}$ with a solution to $\mathcal{P}_{0,\epsilon}$, in terms of the Restricted Isometry Constant (RIC) $\delta_s$, \cite{Candes05a,Candes06d}; defined such that for any vector, $\bm{x}\in\mathbb{R}^P$, supported on at most $s$ entries,
\begin{align}
\label{eqn:RIC}
(1-\delta_{s})\|\bm{x}\|^2_2\le \|\bm{\Psi}\bm{x}\|^2_2 \le(1+\delta_{s})\|\bm{x}\|^2_2.
\end{align}
While we follow Theorem 1 of \cite{Candes06d} due to the simplicity of its proof, we note that improved conditions on the RIC have been presented in more recent studies~\cite{Mo11,Andersson12}. 

\begin{theorem}
Let $s$ be such that $\delta_{3s}+3\delta_{4s}<2$. Then for any approximate solution, $\bm{c}_0$, supported on $\mathcal{C}$ with $|\mathcal{C}|\le s$, any solution $\bm{c}_1$ to $\mathcal{P}^{(\bm{W})}_{1,\epsilon}$ obeys
\begin{align*}
\|\bm{c}_0-\bm{c}_1\|_2\le C\cdot \epsilon, \end{align*}
where the constant $C$ depends on $s$, $\max_{j\in\mathcal{C}}w_j$, and $\min_{j\in\mathcal{C}^c}w_j$.
\end{theorem}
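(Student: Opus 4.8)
The plan is to mirror the argument behind Theorem 1 of \cite{Candes06d}, whose only structural inputs beyond the RIP are a \emph{tube} constraint and a \emph{cone} constraint on the recovery error, and to replace the cone constraint by its weighted analogue. Write $\bm{h} = \bm{c}_1 - \bm{c}_0$. Two facts drive everything. First, since both $\bm{c}_0$ and $\bm{c}_1$ are feasible for $\|\bm{\Psi c}-\bm{u}\|_2\le\epsilon$ (recall $\bm{c}_0$ is a solution to $\mathcal{P}_{0,\epsilon}$), the triangle inequality gives the tube bound $\|\bm{\Psi h}\|_2\le 2\epsilon$. Second, since $\bm{c}_1$ minimizes $\|\bm{Wc}\|_1$ over the feasible set and $\bm{c}_0$ lies in it, $\|\bm{Wc}_1\|_1\le\|\bm{Wc}_0\|_1$.

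First I would convert the optimality inequality into a weighted cone constraint. Splitting $\|\bm{Wc}\|_1$ over $\mathcal{C}$ and $\mathcal{C}^c$, using that $\bm{c}_0$ vanishes off $\mathcal{C}$, and applying the triangle inequality to $\|\bm{Wc}_1\|_1\le\|\bm{Wc}_0\|_1$ yields $\sum_{j\in\mathcal{C}^c}w_j|h_j|\le\sum_{j\in\mathcal{C}}w_j|h_j|$. Replacing $w_j$ by $\min_{j\in\mathcal{C}^c}w_j$ on the left and by $\max_{j\in\mathcal{C}}w_j$ on the right produces the key inequality $\|\bm{h}_{\mathcal{C}^c}\|_1\le\kappa\,\|\bm{h}_{\mathcal{C}}\|_1$, where $\kappa=(\max_{j\in\mathcal{C}}w_j)/(\min_{j\in\mathcal{C}^c}w_j)$. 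This is precisely the unweighted cone constraint inflated by the factor $\kappa$, and it is the sole point at which the weights enter; it is why the final constant $C$ depends on $\max_{j\in\mathcal{C}}w_j$ and $\min_{j\in\mathcal{C}^c}w_j$.

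With the cone constraint in hand I would run the standard shelling/RIP argument. Order the indices of $\mathcal{C}^c$ by decreasing $|h_j|$ and partition them into consecutive blocks $T_1,T_2,\dots$ of a size proportional to $s$, setting $T_{01}=\mathcal{C}\cup T_1$. The shelling estimate $\|\bm{h}_{T_{j+1}}\|_2\le|T_j|^{-1/2}\|\bm{h}_{T_j}\|_1$, summed and combined with the weighted cone constraint, bounds the tail $\sum_{j\ge2}\|\bm{h}_{T_j}\|_2$ by $\kappa$ times a multiple of $\|\bm{h}_{\mathcal{C}}\|_2\le\|\bm{h}_{T_{01}}\|_2$. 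I would then expand $\|\bm{\Psi h}_{T_{01}}\|_2^2$ via $\bm{\Psi h}_{T_{01}}=\bm{\Psi h}-\sum_{j\ge2}\bm{\Psi h}_{T_j}$ and estimate the pieces with the tube bound, the lower RIP inequality $(1-\delta)\|\bm{h}_{T_{01}}\|_2^2\le\|\bm{\Psi h}_{T_{01}}\|_2^2$, and the near-orthogonality bound $|\langle\bm{\Psi x},\bm{\Psi y}\rangle|\le\delta_{a+b}\|\bm{x}\|_2\|\bm{y}\|_2$ for disjointly supported $\bm{x},\bm{y}$ of sizes $a,b$. Collecting terms gives an inequality of the schematic form $(1-\delta_{3s})\|\bm{h}_{T_{01}}\|_2\le A\,\epsilon+B\,\|\bm{h}_{T_{01}}\|_2$, in which only $\delta_{3s}$ and $\delta_{4s}$ occur once the block size is fixed as in \cite{Candes06d}; the hypothesis $\delta_{3s}+3\delta_{4s}<2$ is what keeps the net coefficient $1-\delta_{3s}-B$ positive, so $\|\bm{h}_{T_{01}}\|_2\le C\epsilon$. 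Then $\|\bm{h}\|_2\le\|\bm{h}_{T_{01}}\|_2+\sum_{j\ge2}\|\bm{h}_{T_j}\|_2\le C'\epsilon$ closes the argument.

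The step I expect to be the main obstacle is the bookkeeping in this last estimate: tracking how the factor $\kappa$ from the weighted cone constraint threads through the shelling bound and the cross terms, and arranging that it is \emph{absorbed into the multiplicative constant} $C$ rather than into the positivity threshold. A naive propagation places $\kappa$ in the denominator $1-\delta_{3s}-B$, which would make the hypothesis itself depend on the weights; the delicate point is therefore to apply the cone constraint and choose the block decomposition so that the RIP condition remains $\delta_{3s}+3\delta_{4s}<2$, inherited unchanged from \cite{Candes06d}, while all $\kappa$-dependence (equivalently, the dependence on $\max_{j\in\mathcal{C}}w_j$ and $\min_{j\in\mathcal{C}^c}w_j$) is confined to $C$.
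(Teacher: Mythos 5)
You have reproduced the paper's own route: the tube bound $\|\bm{\Psi}\bm{h}\|_2\le 2\epsilon$, the weighted cone constraint $\|\bm{W}\bm{h}\|_{\mathcal{C}^c,1}\le\beta\|\bm{W}\bm{h}\|_{\mathcal{C},1}$ with $\beta\le 1$ obtained from minimality, its conversion into an unweighted cone constraint inflated by $\kappa=\max_{j\in\mathcal{C}}w_j/\min_{j\in\mathcal{C}^c}w_j$, and then the shelling/RIP machinery of Theorem 1 of \cite{Candes06d}. Every step you actually carry out coincides with the paper's proof (the paper uses the triangle-inequality form of the RIP estimate rather than your inner-product expansion, which is immaterial).

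The step you defer, however, is not bookkeeping; it is the crux, and it cannot be arranged the way you hope. In \cite{Candes06d} the denominator $\sqrt{1-\delta_{M+s}}-\sqrt{(s/M)(1+\delta_M)}$ comes from the chain $\sum_{j\ge 2}\|\bm{h}_{T_j}\|_2\le M^{-1/2}\|\bm{h}\|_{\mathcal{C}^c,1}\le M^{-1/2}\|\bm{h}\|_{\mathcal{C},1}\le\sqrt{s/M}\,\|\bm{h}\|_{\mathcal{C},2}$, whose middle inequality is exactly the \emph{unweighted} cone constraint with constant one. In the weighted problem only $\|\bm{h}\|_{\mathcal{C}^c,1}\le\kappa\beta\|\bm{h}\|_{\mathcal{C},1}$ is available, so the denominator becomes $\sqrt{1-\delta_{M+s}}-\kappa\beta\sqrt{(s/M)(1+\delta_M)}$: the factor $\kappa$ lands in the positivity threshold, and $\delta_{3s}+3\delta_{4s}<2$ guarantees nothing once $\kappa>1$; enlarging the blocks to $M\sim\kappa^2 s$ merely pushes $\kappa$ into the order of the RICs. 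No cleverer bookkeeping can avoid this, because the obstruction is real: at $\epsilon=0$ the claimed bound forces exact recovery, yet by Theorem~\ref{the:beta_w_bound}, for weights constant on $\mathcal{C}$ and constant on $\mathcal{C}^c$ one has $\beta_{\bm{W}}=\kappa\beta_{\bm{I}}$, which exceeds $1$ for large $\kappa$ even when the RIC hypothesis holds; picking $\bm{h}\in\mathcal{N}(\bm{\Psi})$ with $\|\bm{W}\bm{h}\|_{\mathcal{C},1}>\|\bm{W}\bm{h}\|_{\mathcal{C}^c,1}$ and setting $\bm{c}_0=-\bm{h}_{\mathcal{C}}$ gives an $s$-sparse vector for which the feasible point $\bm{h}_{\mathcal{C}^c}$ has strictly smaller weighted norm, so no minimizer equals $\bm{c}_0$. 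You should also know that the paper's proof does not overcome this either: it confines the weights to the factor $\sqrt{1+\eta}$ and then asserts the weight-free denominator verbatim (``Following the proof from Theorem 1 of \cite{Candes06d}''), which is precisely the step whose derivation requires the unweighted cone constraint. What can honestly be proved is either (i) the stated theorem under the extra hypothesis $\max_{j\in\mathcal{C}}w_j\le\min_{j\in\mathcal{C}^c}w_j$, i.e.\ $\kappa\le 1$ (the intended regime of well-chosen weights), in which case your inflated cone constraint implies the unweighted one and \cite{Candes06d} applies verbatim; or (ii) for general weights, the same conclusion under a weight-dependent condition such as $\kappa^2\beta^2(1+\delta_{3s})+3\delta_{4s}<3$, which reduces to the stated one when $\kappa\beta=1$.
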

\begin{proof}
Our proof is essentially an extension of the proof of Theorem 1 in \cite{Candes06d} to account for the weighted $\ell_1$ norm. Let $\bm{h}:=\bm{c}_1-\bm{c}_0$. Note that as $\bm{c}_1 = \bm c_0 + \bm h$ solves the weighted $\ell_1$-minimization problem $\mathcal{P}^{(\bm{W})}_{1,\epsilon}$,
\begin{align*}
\|\bm{W}\bm{c}_0\|_1-\|\bm{W}\bm{h}\|_{\mathcal{C},1}+\|\bm{W}\bm{h}\|_{\mathcal{C}^c,1}\le \|\bm{W}\left(\bm{c}_0+\bm{h}\right)\|_1= \|\bm{W}\bm{c}_1\|_1\le \|\bm{W}\bm{c}_0\|_1,
\end{align*}
where we use notation for an $\ell_r$ norm restricted to coordinates in a set $\mathcal{S}$ as $\|\bm{x}\|_{\mathcal{S},r}$. It follows that for some $0\le \beta\le 1$,
\begin{align}
\label{eqn:betta_def}
\|\bm{W}\bm{h}\|_{\mathcal{C}^c,1}\le \beta\|\bm{W}\bm{h}\|_{\mathcal{C},1}.
\end{align}
Sort the entries of $\bm h$ supported on $\mathcal{C}^c$ in descending order of their magnitudes, divide $\mathcal{C}^c$ into subsets of size $M$, and enumerate these sets as $\mathcal{C}_1,\cdots,\mathcal{C}_{n}$, where $\mathcal{C}_1$ corresponds to the indices of the $M$ largest entries of sorted $\bm h$, $\mathcal{C}_2$ corresponds to the indices of the next $M$ largest entries of sorted $\bm h$, and so on. Let $\mathcal{S}=\mathcal{C}\cup\mathcal{C}_1$, and note that the $k$th largest (in magnitude) entry of any $\bm x$ accounts for less than $1/k$ of the $\Vert \bm x\Vert_1$, so that
\begin{align*}
\|\bm{h}\|_{\mathcal{S}^c,2}^2 = \sum_{k\in \mathcal{S}^c} h_k^2\le\|\bm{h}\|^2_{\mathcal{C}^c,1}\mathop{\sum}\limits_{k=M+1}^{P}k^{-2}\le\|\bm{h}\|_{\mathcal{C}^c,1}^2\cdot\frac{1}{M}.
\end{align*}
We now bound the unweighted $\ell_1$ norm from above by the weighted $\ell_1$ norm, to achieve
\begin{align*}
\|\bm{h}\|_{\mathcal{C}^c,1}^2\cdot\frac{1}{M}\le \|\bm{Wh}\|_{\mathcal{C}^c,1}^2\cdot\frac{1}{M\min_{i\in\mathcal{C}^c}w_i^2}.
\end{align*}
From the condition (\ref{eqn:betta_def}),
\begin{align*}
\|\bm{Wh}\|_{\mathcal{C}^c,1}^2\cdot\frac{1}{M\min_{i\in\mathcal{C}^c}w_i^2}\le\|\bm{Wh}\|_{\mathcal{C},1}^2\cdot\frac{\beta^2}{M\min_{i\in\mathcal{C}^c}w_i^2}.
\end{align*}
Bounding the weighted $\ell_1$ norm from above by the unweighted $\ell_1$ norm gives,
\begin{align*}
\|\bm{Wh}\|_{\mathcal{C},1}^2\cdot\frac{\beta^2}{M\min_{i\in\mathcal{C}^c}w_i^2}\le \|\bm{h}\|_{\mathcal{C},1}^2\cdot\frac{\beta^2\max_{j\in\mathcal{C}}w_j^2}{M\min_{i\in\mathcal{C}^c}w_i^2}.
\end{align*}
Bounding this by the $\ell_2$ norm yields the desired inequality,
\begin{align*}
\|\bm{h}\|_{\mathcal{S}^c,2}^2\le\|\bm{h}\|_{\mathcal{C},2}^2\cdot\frac{\beta^2|\mathcal{S}|\max_{j\in\mathcal{C}}w_j^2}{M\min_{i\in\mathcal{C}^c}w_i^2}.
\end{align*}
Let 
\begin{align*}
\eta:=\frac{\beta^2|\mathcal{S}|\max_{j\in\mathcal{C}}w_j^2}{M\min_{i\in\mathcal{C}^c}w_i^2}.
\end{align*}
It follows that
\begin{align*}
\|\bm{h}\|_2^2 =  \|\bm{h}\|_{\mathcal{S}^c,2}^2 + \|\bm{h}\|_{\mathcal{S},2}^2\le (1+\eta)\|\bm{h}\|_{\mathcal{C},2}^2.
\end{align*}
Following the proof from Theorem 1 of \cite{Candes06d} we have that
\begin{align*}
\|\bm{\Psi}\bm{h}\|_2\ge \left(\sqrt{1-\delta_{M+|\mathcal{C}|}}-\frac{|\mathcal{C}|}{M}\sqrt{1+\delta_M}\right)\|\bm{h}\|_{\mathcal{C},2},
\end{align*}
and it follows that
\begin{align*}
\|\bm{h}\|_2&\le\sqrt{1+\eta}\|\bm{h}\|_{\mathcal{C},2}\le\frac{\sqrt{1+\eta}}{\sqrt{1-\delta_{M+|\mathcal{C}|}}-\frac{|\mathcal{C}|}{M}\sqrt{1+\delta_M}}\|\bm{\Psi h}\|_{2},\\
&\le\frac{2\sqrt{1+\eta}}{\sqrt{1-\delta_{M+|\mathcal{C}|}}-\frac{|\mathcal{C}|}{M}\sqrt{1+\delta_M}}\cdot\epsilon,
\end{align*}
which yields the proof with the remaining arguments from Theorem 1 of \cite{Candes06d}.
\end{proof}
%

%*** Alireza: Mention that less restricted bound on \delta has been proposed since [42].

%&&& Jerrad: I changed this bit to read closer to $\beta$, and I also added in the effect that this is specifically for noiseless recovery.

In the case of recovery with no truncation error, that is $\epsilon = 0$, we expand on the consideration of the parameter $\beta$ in the above proof. We note that results for the case of $\epsilon = 0$ may not guarantee that a sparsest solution to $\mathcal{P}_{0,\epsilon}$ has been found, but may help to verify that as sparse as possible a solution to $\bm{u}_1=\bm{\Psi}\bm{c}_1$ has been found. Stated another way, the computed solution that recovers $\bm{u}_1$ may have verifiable sparsity, where $\bm{u}_1$ is close to $\bm{u}$.

We show how $\bm{W}$ and $\mathcal{C}$ affect the recovery when $\epsilon = 0$ through the null-space of $\bm{\Psi}$. Specifically, recall that the difference between any two solutions to $\bm{\Psi}\bm{c}=\bm{u}$ is a vector in the null-space of $\bm{\Psi}$, denoted by $\mathcal{N}(\bm{\Psi})$. It follows that
\begin{align}
\label{eqn:betadef}
\beta_{\bm{W}} = \mathop{\max}\limits_{\bm{c}\in\mathcal{N}(\bm{\Psi})}\frac{\|\bm{W}\bm{c}\|_{\mathcal{C},1}}{\|\bm{W}\bm{c}\|_{\mathcal{C}^c,1}},
\end{align}
is a bound on $\beta$ in (\ref{eqn:betta_def}) for the case that $\epsilon=0$.
% *** Jerrad: Please verify the definition of $c_0$ below.
% &&& I adjusted the exposition here to be only applicable for noiseless recovery, where it is most appropriate. Also, this is probably a good place to add a citation for Iouditsky and similar

When $\beta_{\bm{W}}$ is small we notice that adding to the sparse solution, $\bm{c}_0$, any vector $\bm{c}\in\mathcal{N}(\bm{\Psi})$ will induce a relatively small change in $\|\bm{W}(\bm{c}_0+\bm{c})\|_{\mathcal{C},1}$ while inducing a larger change in $\|\bm{W}(\bm{c}_0+\bm{c})\|_{\mathcal{C}^c}$. We see that we may decrease $\beta_{\bm{W}}$ if we make $w_j$ smaller for $j\in\mathcal{C}$, and larger for $j\in\mathcal{C}^c$, and this is consistent with our intuition regarding the identification of weights. As such, for small $\beta_{\bm{W}}$ we expect that $\|\bm{c}+\bm{c}_0\|_1>\|\bm{c}_0\|_1$ for all $\bm{c}\in\mathcal{N}(\bm{\Psi})$, and the following theorem shows that a critical value for $\beta_{\bm{W}}$ is $1$.
\begin{theorem}
If $\beta_{\bm{W}}<1$, then finding a solution to $\mathcal{P}^{(\bm{W})}_{1,0}$ is identical to finding a solution to $\mathcal{P}_{0,0}$. This result is sharp in that if $\beta_{\bm{W}}\ge 1$, a solution to $\mathcal{P}^{(\bm{W})}_{1,0}$, may not be identical to any solution of $\mathcal{P}_{0,0}$.
\end{theorem}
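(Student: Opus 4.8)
The plan is to settle both directions of this equivalence purely through the null space of $\bm{\Psi}$, which is exactly what the definition of $\beta_{\bm{W}}$ in (\ref{eqn:betadef}) is designed to exploit. Since $\epsilon=0$, the constraint in $\mathcal{P}^{(\bm{W})}_{1,0}$ reads $\bm{\Psi}\bm{c}=\bm{u}=\bm{\Psi}\bm{c}_0$, so every feasible point has the form $\bm{c}_0+\bm{h}$ with $\bm{h}\in\mathcal{N}(\bm{\Psi})$. The whole question thus reduces to comparing $\|\bm{W}(\bm{c}_0+\bm{h})\|_1$ with $\|\bm{W}\bm{c}_0\|_1$ as $\bm{h}$ ranges over the null space.

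For the sufficiency direction ($\beta_{\bm{W}}<1$), I would split the weighted norm over $\mathcal{C}$ and $\mathcal{C}^c$. Because $\bm{c}_0$ vanishes on $\mathcal{C}^c$, the reverse triangle inequality on $\mathcal{C}$ together with $\|\bm{W}\bm{c}_0\|_{\mathcal{C},1}=\|\bm{W}\bm{c}_0\|_1$ gives
\begin{align*}
\|\bm{W}(\bm{c}_0+\bm{h})\|_1-\|\bm{W}\bm{c}_0\|_1\ge \|\bm{W}\bm{h}\|_{\mathcal{C}^c,1}-\|\bm{W}\bm{h}\|_{\mathcal{C},1}.
\end{align*}
By (\ref{eqn:betadef}), every nonzero $\bm{h}\in\mathcal{N}(\bm{\Psi})$ obeys $\|\bm{W}\bm{h}\|_{\mathcal{C},1}\le\beta_{\bm{W}}\|\bm{W}\bm{h}\|_{\mathcal{C}^c,1}$, and when $\beta_{\bm{W}}<1$ the right-hand side is strictly positive (the case $\|\bm{W}\bm{h}\|_{\mathcal{C}^c,1}=0$ is excluded, since a null vector supported on $\mathcal{C}$ would make the ratio infinite, contradicting $\beta_{\bm{W}}<1$). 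Hence $\bm{c}_0$ is the \emph{unique} minimizer of $\mathcal{P}^{(\bm{W})}_{1,0}$, and as $\bm{c}_0$ is by construction a solution of $\mathcal{P}_{0,0}$, the two problems coincide.

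For sharpness I would exhibit a failure whenever $\beta_{\bm{W}}\ge 1$. Since the ratio in (\ref{eqn:betadef}) is scale-invariant, its supremum is attained on the unit sphere of $\mathcal{N}(\bm{\Psi})$ (or is $+\infty$); let $\bm{h}^\ast$ realize it, so $\|\bm{W}\bm{h}^\ast\|_{\mathcal{C},1}\ge\|\bm{W}\bm{h}^\ast\|_{\mathcal{C}^c,1}$. I would then construct $\bm{c}_0$ supported on $\mathcal{C}$ with $\text{sign}(c_{0,j})=-\text{sign}(h^\ast_j)$ on $\mathcal{C}$ and with entries large enough that no coordinate of $\bm{c}_0+t\bm{h}^\ast$ changes sign for small $t>0$. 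Linearizing then yields
\begin{align*}
\|\bm{W}(\bm{c}_0+t\bm{h}^\ast)\|_1=\|\bm{W}\bm{c}_0\|_1-t\left(\|\bm{W}\bm{h}^\ast\|_{\mathcal{C},1}-\|\bm{W}\bm{h}^\ast\|_{\mathcal{C}^c,1}\right).
\end{align*}
If $\beta_{\bm{W}}>1$ the parenthesis is positive and $\bm{c}_0$ is strictly beaten, so it cannot be a weighted $\ell_1$ minimizer; if $\beta_{\bm{W}}=1$ the value is unchanged, so the denser point $\bm{c}_0+t\bm{h}^\ast$ ties $\bm{c}_0$, giving a weighted $\ell_1$ solution that fails to solve $\mathcal{P}_{0,0}$.

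The sufficiency estimate is the routine half; the delicate part will be making the sharpness construction airtight. The main obstacle is to verify all the fine print of the counterexample simultaneously: that the maximizer $\bm{h}^\ast$ genuinely exists (compactness on the sphere, with the $+\infty$ case handled separately), that the sign and magnitude choices keep the linearization exact over a full interval of small $t$, and—most importantly—that in the constructed instance $\bm{c}_0$ really is the sparsest feasible vector (for example by taking $\bm{\Psi}_{\mathcal{C}}$ of full column rank), so that $\bm{c}_0+t\bm{h}^\ast$, having support outside $\mathcal{C}$, is provably denser and hence not a solution of $\mathcal{P}_{0,0}$.
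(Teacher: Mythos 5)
Your sufficiency half is correct and is, in substance, the paper's own argument: the paper runs the same null-space decomposition and reverse triangle inequality, only phrased through the auxiliary constant $\gamma_{\bm{W}}$ of (\ref{eqn:gammadef}) rather than directly through $\beta_{\bm{W}}$; your version additionally notes uniqueness of the minimizer and handles the degenerate case of null vectors supported on $\mathcal{C}$, which is fine.

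The sharpness half has a genuine gap. The theorem's sharpness claim is existential, and the paper proves it by exhibiting one explicit instance: $\bm{W}=\bm{I}$, the $2\times 3$ matrix $\bm{\Psi}$ with parameter $\alpha>2$ given in the proof, and $\bm{u}=(\alpha~\alpha)^T$, where the weighted minimizer $(1~1~0)^T$ is strictly denser than the unique sparsest solution $(0~0~\alpha)^T$. You instead attempt the universal statement that \emph{every} instance with $\beta_{\bm{W}}\ge 1$ admits a failing $\bm{c}_0$, and that statement is false under the theorem's notion of failure. For $\beta_{\bm{W}}>1$ your perturbation argument only shows that $\bm{c}_0$ is not a weighted minimizer; it does not show that the actual minimizer fails to solve $\mathcal{P}_{0,0}$ --- it may simply be a \emph{different} sparsest solution. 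Concretely, take the paper's matrix with $0<\alpha<2$, $\bm{W}=\bm{I}$, and $\mathcal{C}=\{1,2\}$: the null space is spanned by $(1~1~{-\alpha})^T$, so $\beta_{\bm{I}}=2/\alpha>1$. Any $\bm{c}_0=(a~b~0)^T$ with $ab\neq 0$ and $a\neq b$ solves $\mathcal{P}_{0,0}$ (along the feasible line the three coordinates vanish at three distinct parameter values, so every feasible vector has at least two nonzeros), yet a short piecewise-linear computation shows the unique $\ell_1$ minimizer is again $2$-sparse --- for $0<a<b$ it is $(0~~b-a~~\alpha a)^T$, and the other sign patterns behave analogously --- hence it also solves $\mathcal{P}_{0,0}$. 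So in this instance no admissible $\bm{c}_0$ exhibits a failure, and your program cannot be completed without also choosing $\bm{\Psi}$ and $\mathcal{C}$, at which point it collapses to the paper's explicit example. Two further points of fine print do not hold up: your $\beta_{\bm{W}}=1$ tie argument presumes the constructed $\bm{c}_0$ solves $\mathcal{P}_{0,0}$, which is exactly what is not guaranteed; and your proposed patch for that --- requiring $\bm{\Psi}_{\mathcal{C}}$ to have full column rank --- is insufficient, since it only excludes alternative representations supported inside $\mathcal{C}$, not sparser solutions supported on other index sets.
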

\begin{proof}
Closely related to $\beta_{\bm W}$, we define the quantity $\gamma_{\bm W}$ given by
\begin{align}
\label{eqn:gammadef}
\gamma_{\bm{W}} = \mathop{\max}\limits_{\bm{c}\in\mathcal{N}(\bm{\Psi})}\frac{\|\bm{W}\bm{c}\|_{\mathcal{C},1}}{\|\bm{W}\bm{c}\|_1},
\end{align}
where the two constants are related by
\begin{align*}
\beta_{\bm{W}}&= (\gamma_{\bm{W}}^{-1}-1)^{-1}.
\end{align*}

Recalling that $\bm{c}_0$ is supported on $\mathcal{C}$, we have that
\begin{align*}
\|\bm{W}(\bm{c}+\bm{c}_0)\|_1&=\|\bm{W}(\bm{c}+\bm{c}_0)\|_{\mathcal{C},1}+\|\bm{W}\bm{c}\|_{\mathcal{C}^c,1}.
\end{align*}
Applying the reverse triangle inequality to $\|\bm{W}(\bm{c}+\bm{c}_0)\|_{\mathcal{C},1}$, we have that
\begin{align*}
\|\bm{W}(\bm{c}+\bm{c}_0)\|_1&\ge \|\bm{W}\bm{c_0}\|_{\mathcal{C},1}-\|\bm{W}\bm{c}\|_{\mathcal{C},1} + \|\bm{W}\bm{c}\|_{\mathcal{C}^c,1}.
\end{align*}
By the definition of $\gamma_{\bm{W}}$ in (\ref{eqn:gammadef}) we have that
\begin{align*}
\|\bm{W}\bm{c}\|_{\mathcal{C},1}&\le \gamma_{\bm{W}}\|\bm{W}\bm{c}\|_{1},\\
\|\bm{W}\bm{c}\|_{\mathcal{C}^c,1}&= \|\bm{W}\bm{c}\|_1 - \|\bm{W}\bm{c}\|_{\mathcal{C},1},\\
&\ge (1-\gamma_{\bm{W}})\|\bm{W}\bm{c}\|_1.
\end{align*}
It follows that
\begin{align*}
\|\bm{W}(\bm{c}+\bm{c}_0)\|_1&\ge \|\bm{W}\bm{c_0}\|_{\mathcal{C},1}-\gamma_{\bm{W}}\|\bm{W}\bm{c}\|_{\mathcal{C},1} + (1-\gamma_{\bm{W}})\|\bm{W}\bm{c}\|_1,\\
&=\|\bm{W}\bm{c_0}\|_{\mathcal{C},1} + (1-2\gamma_{\bm{W}})\|\bm{W}\bm{c}\|_1,
\end{align*}
which implies that when $\gamma_{\bm{W}}<0.5$, or equivalently when $\beta_{\bm{W}}<1$,
\begin{align*}
\|\bm{W}(\bm{c}+\bm{c}_0)\|_1&>\|\bm{W}\bm{c_0}\|_{\mathcal{C},1}=\|\bm{W}\bm{c_0}\|_{1},
\end{align*}
and as such $\bm{c}_0$ solves $\mathcal{P}^{(\bm{W})}_{1,0}$.
To show sharpness, let $\bm{W}$ be the identity matrix. For $\alpha> 0$ define $\bm{\Psi}$ and $\bm{u}$ by
\begin{align*}
\bm{\Psi} &=\left(\begin{array}{ccc}
\alpha&    0& 1\\
0    &\alpha& 1
\end{array}
\right); & \bm{u} &=\left(\begin{array}{c}
\alpha\\
\alpha
\end{array}
\right).
\end{align*}
Note that the solution to $\mathcal{P}_{0,0}$ is always $(0~0~\alpha)^{T}$, and as such $\beta_{\bm{W}} = \alpha/2$. If $\beta_{\bm{W}} = 1$, corresponding to $\alpha = 2$, then $(0~0~2)^{T}$ or $(1~1~0)^{T}$ are both solutions to $\mathcal{P}^{(\bm{W})}_{1,0}$ . If $\beta_{\bm{W}}>1$, corresponding to $\alpha > 2$, the solution to $\mathcal{P}^{(\bm{W})}_{1,0}$ is $(1~1~0)^{T}$. 

As an aside, we note that if $\beta_{\bm{W}} < 1$, corresponding to $\alpha < 2$, the unique solution to $\mathcal{P}^{(\bm{W})}_{1,0}$ is $(0~0~\alpha)^{T}$ as guaranteed by the theorem.
\end{proof}
This result suggests $\beta_{\bm{W}}$ as a measure of quality of $\bm{W}$ with smaller $\beta_{\bm{W}}$ being preferable. The following bound is useful in relating the recovery via weighted $\ell_1$-minimization of a particular $\bm{c}_0$ to a uniform recovery in terms of the one implied by the RIC.
\begin{theorem}
\label{the:beta_w_bound}
Let 
\begin{align*}
c &:= \mathop{\min}\limits_{i\in \mathcal{C}}w_i/\mathop{\max}\limits_{i\in \mathcal{C}^c}w_i;\\
C &:= \mathop{\max}\limits_{i\in \mathcal{C}}w_i/\mathop{\min}\limits_{i\in \mathcal{C}^c}w_i.
\end{align*}
It follows that,
\begin{align}
\label{eqn:BetaBounds}
c\beta_{\bm{I}} \le \beta_{\bm{W}} =\mathop{\max}\limits_{\bm{c}\in\mathcal{N}(\bm{\Psi})}\frac{\|\bm{W}\bm{c}\|_{\mathcal{C},1}}{\|\bm{W}\bm{c}\|_{\mathcal{C}^c,1}} \le C\beta_{\bm{I}}.
\end{align}
Further, 
\begin{align}
\label{eqn:RICBound}
\beta_{\bm{I}}\le \frac{\sqrt{2}\delta_{2|\mathcal{C}|}}{1-\delta_{2|\mathcal{C}|}},
\end{align}
where $\delta$ is a RIC.
\end{theorem}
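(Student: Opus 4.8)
The statement splits into two independent pieces, and I would attack them separately. The sandwich bound \eqref{eqn:BetaBounds} is the easy one: since $\bm{W}$ acts diagonally, on each coordinate block it merely rescales magnitudes between the extreme weights. Writing $\|\bm{W}\bm{c}\|_{\mathcal{C},1}=\sum_{j\in\mathcal{C}}w_j|c_j|$ and likewise on $\mathcal{C}^c$, I would bound the numerator between $\min_{j\in\mathcal{C}}w_j\,\|\bm{c}\|_{\mathcal{C},1}$ and $\max_{j\in\mathcal{C}}w_j\,\|\bm{c}\|_{\mathcal{C},1}$, and the denominator between the analogous extremes over $\mathcal{C}^c$. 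This yields, for every fixed $\bm{c}\in\mathcal{N}(\bm{\Psi})$,
\[
c\,\frac{\|\bm{c}\|_{\mathcal{C},1}}{\|\bm{c}\|_{\mathcal{C}^c,1}}\le\frac{\|\bm{W}\bm{c}\|_{\mathcal{C},1}}{\|\bm{W}\bm{c}\|_{\mathcal{C}^c,1}}\le C\,\frac{\|\bm{c}\|_{\mathcal{C},1}}{\|\bm{c}\|_{\mathcal{C}^c,1}}.
\]
Maximizing the right inequality over $\mathcal{N}(\bm{\Psi})$ gives $\beta_{\bm{W}}\le C\beta_{\bm{I}}$ immediately, where $\beta_{\bm{I}}$ is \eqref{eqn:betadef} with $\bm{W}=\bm{I}$; for the lower bound, evaluating the left inequality at the maximizer of $\beta_{\bm{I}}$ and noting that $\beta_{\bm{W}}$ is itself a maximum gives $\beta_{\bm{W}}\ge c\beta_{\bm{I}}$. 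There is no obstacle here.

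For the RIC bound \eqref{eqn:RICBound} I would run the classical null-space argument underlying \cite{Candes06d}. Fix $\bm{c}\in\mathcal{N}(\bm{\Psi})$ and set $T_0=\mathcal{C}$ with $|T_0|=s$. Sort the entries of $\bm{c}$ on $\mathcal{C}^c$ by decreasing magnitude and partition the support into consecutive blocks $T_1,T_2,\dots$ of size $s$, then put $S=T_0\cup T_1$, so $|S|=2s$. Because $\bm{\Psi}\bm{c}=\bm{0}$ we have $\bm{\Psi}\bm{c}_S=-\sum_{j\ge2}\bm{\Psi}\bm{c}_{T_j}$. Combining the lower RIC bound $(1-\delta_{2s})\|\bm{c}_S\|_2^2\le\|\bm{\Psi}\bm{c}_S\|_2^2$ with the standard near-orthogonality estimate $|\langle\bm{\Psi}\bm{c}_{T_i},\bm{\Psi}\bm{c}_{T_j}\rangle|\le\delta_{2s}\|\bm{c}_{T_i}\|_2\|\bm{c}_{T_j}\|_2$ for disjoint $s$-sparse blocks, expanding $\bm{c}_S=\bm{c}_{T_0}+\bm{c}_{T_1}$ in the cross terms, and applying $\|\bm{c}_{T_0}\|_2+\|\bm{c}_{T_1}\|_2\le\sqrt{2}\,\|\bm{c}_S\|_2$, everything collapses to
\[
(1-\delta_{2s})\|\bm{c}_S\|_2\le\sqrt{2}\,\delta_{2s}\sum_{j\ge2}\|\bm{c}_{T_j}\|_2.
\]

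The final step is the shelling bound: each entry of $T_j$ is at most the average of the entries of $T_{j-1}$, so $\|\bm{c}_{T_j}\|_2\le s^{-1/2}\|\bm{c}_{T_{j-1}}\|_1$ and hence $\sum_{j\ge2}\|\bm{c}_{T_j}\|_2\le s^{-1/2}\|\bm{c}\|_{\mathcal{C}^c,1}$. Since $\|\bm{c}\|_{\mathcal{C},1}=\|\bm{c}_{T_0}\|_1\le\sqrt{s}\,\|\bm{c}_{T_0}\|_2\le\sqrt{s}\,\|\bm{c}_S\|_2$, substituting gives $\|\bm{c}\|_{\mathcal{C},1}\le\frac{\sqrt{2}\,\delta_{2s}}{1-\delta_{2s}}\|\bm{c}\|_{\mathcal{C}^c,1}$, and taking the maximum over $\mathcal{N}(\bm{\Psi})$ proves the claim. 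The one delicate point — the main obstacle — is obtaining the constant exactly $\sqrt{2}$ rather than a cruder factor of $2$: this forces me to fold $T_1$ into $S$ so that the leading $2s$-sparse piece is handled by the RIC as a single block, rather than bounding $\|\bm{c}_{T_1}\|_2$ off to the side, and to deploy the $\ell_1$–$\ell_2$ vector inequality on $\|\bm{c}_{T_0}\|_2+\|\bm{c}_{T_1}\|_2$ at precisely the right moment.
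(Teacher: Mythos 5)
Your proof is correct and follows essentially the same route as the paper: the sandwich bound (\ref{eqn:BetaBounds}) is obtained there, too, directly from the definition of $\beta_{\bm{W}}$ in (\ref{eqn:betadef}), by exactly the comparison of weights against their extremes on $\mathcal{C}$ and $\mathcal{C}^c$ that you spell out. For the RIC bound (\ref{eqn:RICBound}), the paper simply cites Lemma~2.2 of \cite{Candes08c}, which is precisely the null-space property you re-derive from scratch via the standard block-partition, near-orthogonality, and shelling argument, arriving at the same constant $\sqrt{2}\delta_{2|\mathcal{C}|}/(1-\delta_{2|\mathcal{C}|})$.
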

\begin{proof}
We first note that (\ref{eqn:BetaBounds}) follows from the definition of $\beta_{\bm{W}}$ in (\ref{eqn:betadef}).
%\begin{align*}
%\beta_{\bm{W}} &= (\gamma_{\bm{W}}^{-1}-1)^{-1},\\ 
%&=\left(\mathop{\min}\limits_{\bm{c}\in\mathcal{N}(\bm{\Psi})}\frac{\|\bm{W}\bm{c}\|_{1}}{\|\bm{W}\bm{c}\|_{\mathcal{C},1}}-1\right)^{-1},\\
%&=\left(\mathop{\min}\limits_{\bm{c}\in\mathcal{N}(\bm{\Psi})}\frac{\|\bm{W}\bm{c}\|_{\mathcal{C}^c}}{\|\bm{W}\bm{c}\|_{\mathcal{C},1}}\right)^{-1},\\
%&=\mathop{\max}\limits_{\bm{c}\in\mathcal{N}(\bm{\Psi})}\frac{\|\bm{W}\bm{c}\|_{\mathcal{C},1}}{\|\bm{W}\bm{c}\|_{\mathcal{C}^c,1}}.
%\end{align*}
%From this form, (\ref{eqn:BetaBounds}) follows. 
To show (\ref{eqn:RICBound}), note that by Lemma 2.2 of \cite{Candes08c}, it follows that for any vector $\bm{x}$ in the null space of $\bm{\Psi}$, 
\begin{align*}
\|\bm{x}\|_{\mathcal{C},1}&\le \frac{\sqrt{2}\delta_{2|\mathcal{C}|}}{1-\delta_{2|\mathcal{C}|}}\|\bm{x}\|_{\mathcal{C}^c,1},
\end{align*}
which shows the bound.
\end{proof}

To complete our discussion on the theoretical analysis of weighted $\ell_1$-minimization, we require a sufficiently small RIC $\delta$ to bound $\beta_{\bm I}$ and $\beta_{\bm W}$ in Theorem \ref{the:beta_w_bound}, and hence $\beta$ in (\ref{eqn:betta_def}). For this, we report the result of \cite[Theorem 4.3]{Rauhut12} -- on general bounded orthonormal basis $\{\psi_j\}$ -- specialized to the case of multi-variate Legendre PC expansions.

\begin{corollary}Let $\{\psi_j\}_{1\le j\le P}$ be a Legendre PC basis in $d$ independent random variables $\bm \Xi =(\Xi_1,\dots,\Xi_d)$ uniformly distributed over $[-1,1]^d$ and with a total degree less than or equal to $q$. Let the matrix $\bm\Psi$ with entries $\bm\Psi(i,j)=\psi_j(\bm\xi^{(i)})$ correspond to realizations of $\{\psi_j\}$ at $\bm \xi^{(i)}$ sampled independently from the measure of $\bm\Xi$. If
\begin{equation}
\label{eqn_RIC_bound}
N\ge C 3^q\delta^{-2}s\log^3(s)\log(P),
\end{equation}
then the RIC, $\delta_s$, of $\frac{1}{\sqrt{N}}\bm\Psi$ satisfies $\delta_s\le\delta$ with probability larger than $1-P^{-\gamma\log^3(s)}$. Here, $C$ and $\gamma$ are constants independent of $N,q$, and $d$.
\end{corollary}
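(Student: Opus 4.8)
The plan is to invoke the general restricted isometry result for bounded orthonormal systems, \cite[Theorem 4.3]{Rauhut12}, and to specialize its sampling bound to the present setting by controlling the uniform bound on the basis functions. That general theorem guarantees, for an orthonormal system $\{\psi_j\}$ with $\sup_j\|\psi_j\|_\infty\le K$ sampled independently from its orthogonalizing measure, that the RIC $\delta_s$ of the normalized matrix $\frac{1}{\sqrt{N}}\bm\Psi$ satisfies $\delta_s\le\delta$ with the stated probability, provided $N\ge C K^2\delta^{-2}s\log^3(s)\log(P)$. Thus the entire task reduces to showing that for the multivariate Legendre basis of total degree at most $q$ one may take $K^2=3^q$, after which the probabilistic content is inherited verbatim from \cite{Rauhut12}.

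First I would record the univariate bound. The orthonormal Legendre polynomials with respect to the uniform density on $[-1,1]$ are $\psi_{k,j}=\sqrt{2j+1}\,P_j$, where $P_j$ is the classical Legendre polynomial normalized by $P_j(1)=1$ and $\max_{x\in[-1,1]}|P_j(x)|=1$, the maximum being attained at the endpoints. Hence $\|\psi_{k,j}\|_\infty=\sqrt{2j+1}$. Using the tensor-product structure (\ref{eqn:PCProdDef}), the multivariate basis function indexed by $\bm\alpha$ satisfies
\begin{align*}
\|\psi_{\bm\alpha}\|_\infty=\prod_{k=1}^d\|\psi_{k,\alpha_k}\|_\infty=\prod_{k=1}^d\sqrt{2\alpha_k+1}.
\end{align*}
The key elementary estimate is $2m+1\le 3^m$ for every nonnegative integer $m$, which follows by a one-line induction. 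Applying it coordinatewise and invoking the total-degree constraint $\sum_k\alpha_k\le q$ gives
\begin{align*}
\|\psi_{\bm\alpha}\|_\infty^2=\prod_{k=1}^d(2\alpha_k+1)\le\prod_{k=1}^d 3^{\alpha_k}=3^{\sum_k\alpha_k}\le 3^q,
\end{align*}
so that $K=\sup_{\bm\alpha}\|\psi_{\bm\alpha}\|_\infty\le 3^{q/2}$ uniformly over the index set.

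Finally I would substitute $K^2\le 3^q$ into the sampling condition of \cite[Theorem 4.3]{Rauhut12}, which turns $N\ge CK^2\delta^{-2}s\log^3(s)\log(P)$ into exactly (\ref{eqn_RIC_bound}), with the accompanying probability bound carried over unchanged. The one place demanding care is matching the hypotheses of the general theorem to this setting: that the $\psi_{\bm\alpha}$ genuinely form an orthonormal system under the sampling measure $\rho$, and that the bound $K^2\le 3^q$ is uniform over \emph{all} multi-indices of total degree at most $q$ rather than merely the extremal ``diagonal'' ones. I expect this bookkeeping to be the main obstacle, though it is organizational rather than analytical — all of the concentration machinery is supplied by \cite{Rauhut12}, and the Legendre-specific input is precisely the clean estimate $K^2\le 3^q$.
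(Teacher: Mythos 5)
Your proof is correct and follows essentially the same route as the paper: both invoke \cite[Theorem 4.3]{Rauhut12} for bounded orthonormal systems and reduce the problem to the uniform bound $\sup_j\Vert\psi_j\Vert_\infty\le 3^{q/2}$ for the total-degree-$q$ Legendre basis. The only difference is that you derive this bound explicitly (via $\Vert\psi_{k,j}\Vert_\infty=\sqrt{2j+1}$, the tensor-product structure, and $2m+1\le 3^m$), whereas the paper simply cites \cite{Doostan11a} for it.
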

\begin{proof} The proof is a direct consequence of Theorem 4.3 in  \cite{Rauhut12} by observing that $\{\psi_j\}_{1\le j\le P}$ admits a uniform bound $\sup_j\Vert\psi_j\Vert_{\infty} = 3^{\frac{q}{2}}$, see, e.g. \cite{Doostan11a}. 
\end{proof}

\begin{remark}[Weighted $\ell_1$-minimization vs. $\ell_1$-minimization] While our theoretical analyses provide insight on the accuracy of the solution to the weighted $\ell_1$-minimization problem $\mathcal{P}^{(\bm{W})}_{1,\epsilon}$ relative to the solution to $\mathcal{P}_{0,\epsilon}$ or $\mathcal{P}_{0,0}$, they do not provide conclusive comparison between the accuracy of the solution to $\mathcal{P}^{(\bm{W})}_{1,\epsilon}$ and the standard $\ell_1$-minimization problem $\mathcal{P}_{1,\epsilon}$. However, for cases where the choice of $\bm W$ is such that the constant $C$ in (\ref{eqn:BetaBounds}) is sufficiently smaller than 1, more accurate solutions may be expected from $\mathcal{P}^{(\bm{W})}_{1,\epsilon}$ than $\mathcal{P}_{1,\epsilon}$.
\end{remark}

\subsection{Choosing $\epsilon$ via cross validation}
\label{sec:cross_validation}

The choice of $\epsilon>0$ for the optimization problems in (\ref{eq:CS}) or (\ref{eq:CS_w}) is critical. If $\epsilon$ is too small, then $\bm{c}$ will overfit the data and give unfounded confidence in $\bm{u}(\bm{\Xi})$; if $\epsilon$ is too large, then $\bm{c}$ will underfit the data and give unnecessary diffidence in $\bm{u}(\bm{\Xi})$.  In this work, following \cite{Doostan11a}, the selection of $\epsilon$ is determined by cross-validation; here we divide the available data into two sets, a reconstruction set of $N_r$ samples used to calculate $\bm{c}_r$, and a validation set of $N_v$ samples to test this approximation. For the reconstruction set we let $\bm{c}_r(\epsilon_r)$ denote the calculated solution to (\ref{eq:CS}) or (\ref{eq:CS_w}) as a function of $\epsilon_r$, and in this manner identify an optimal $\epsilon$ which is then corrected based on $N_r$ and $N_v$. This algorithm is summarized below where the subscript indicates which data set is used in calculating the quantity: $r$ for the reconstruction set; $v$ for the validation set.
\begin{algorithm}[H]
\label{alg:cv}
\caption{Algorithm for choosing $\epsilon$ using cross-validation.}
\begin{algorithmic}
\STATE Randomly divide the $N$ samples of $\bm\Xi,u(\bm\Xi)$ into two sets, a reconstruction set with $N_r$ samples and a validation set with $N_v$ samples.
\STATE Let $\epsilon^{*}=\mathop{\arg\min}_{\epsilon_r>0}\Vert \bm{\Psi}_v\bm{c}_r(\epsilon_r)-\bm{u}_v\Vert_2$.
\STATE Return $\epsilon=\sqrt{\frac{N}{N_r}}\epsilon^{*}$.
\end{algorithmic}
\end{algorithm}
We note that the optimal $\epsilon$ is dependent on the algorithm used to calculate $\bm{c}_r$ as well as the data input into that algorithm. In this paper we set $N_r = \lfloor \frac{4}{5} N\rfloor $ and $N_v = N - N_r$. %

%If we maintain the input data as fixed, we may gauge an algorithm's ability to recover $\bm{c}$ by the scale of $\epsilon$; for the examples considered we numerically evaluate $\epsilon$ as it depends on each algorithm in Section~\ref{sec:examples}.

%%%%%%%%%%%%%%%%%%%%%%%%%%%%%%%%%%%%%%%%%%%%%%%%%%%%%
%
\section{Numerical examples}
\label{sec:examples}
%
%%%%%%%%%%%%%%%%%%%%%%%%%%%%%%%%%%%%%%%%%%%%%%%%%%%%%

In this section, we empirically demonstrate the accuracy of the weighted $\ell_1$-minimization approach in estimating statistics of solutions to two differential equations with random inputs. 

%%%We  also provide a comparative study on the accuracy of this approach in estimating solution statistics in compared to other 

%
%
\subsection{Case I: Elliptic equation with stochastic coefficient}
\label{subsec:L1_example_PDE}

We first consider the solution of an elliptic realization of (\ref{eqn:PDE_operator}) in one spatial dimension, defined by
\begin{eqnarray}
\label{eqn:PDE}
&&-\nabla\cdot \left(a(x,\bm{\Xi})\nabla u(x,\bm{\Xi})\right)=1 \quad x\in \mathcal{D}=(0,1),\nonumber \\
&&u(0,\bm{\Xi})=u(1,\bm{\Xi})=0.
\end{eqnarray}

We assume that the diffusion coefficient $a(x,\bm{\Xi})$ is modeled by the expansion
\begin{equation*}
a(x,\bm{\Xi})=\bar{a}(x)+\sigma_a\sum_{k=1}^{d}\sqrt{\lambda_k}\varphi_{k}(x)\Xi_{k},    
\end{equation*}
in which the random variables $\{\Xi_{k}\}_{k=1}^d$ are independent and uniformly distributed on $[-1,1]$. Additionally, $\{\varphi_k\}_{k=1}^d$ are the eigenfunctions of the Gaussian covariance kernel 
\begin{equation}
\label{eqn:kernel}
C_{aa}(x_1,x_2) = \exp{\left[-\frac{(x_1-x_2)^2}{l_c^2}\right]}, \nonumber
\end{equation}
corresponding to $d$ largest eigenvalues $\{\lambda_k\}_{k=1}^d$ of $C_{aa}(x_1,x_2)$ with correlation length $l_c=1/16$. In our numerical tests, we set $\bar{a}(x)=0.1$, $\sigma_a=0.021$, and $d=40$ resulting in strictly positive realizations of $a(x,\bm\Xi)$. Noting that $d$ represents the dimension of the problem in stochastic space, the Legendre PC basis functions for this problem are chosen as in (\ref{eqn:PCProdDef}), where we use an incomplete third order truncation, i.e., $q=3$, with only $P=2500$ basis functions. The PC basis functions $\{\psi_j\}$ are sorted such that, for any given order $q$, the random variables $\Xi_k$ with smaller indices $k$ appear first in the basis. The quantity of interest is $u(0.5,\bm\Xi)$, the solution in the middle of the spatial domain.

\subsubsection{Setting weights $w_j$}% of $\vert c_{\bm{\alpha}}\vert$}
\label{subsubsec:decay}

Recently, work has been done to derive estimates for the decay of the coefficients $c_{\bm\alpha}(\bm x)$ in the Legendre PC expansion of the solution $u(x,\bm{\Xi})\approx \sum_{\bm{\alpha}}c_{\bm\alpha}(\bm x)\psi_{\bm\alpha}(\bm\Xi) $ to problem (\ref{eqn:PDE}),~\cite{Bieri09c, Beck12a, Migliorati11}. Such estimates allow us to identify {\it a priori} knowledge of $\bm{c}$ and set the weights $w_j$ in the weighted $\ell_1$-minimization approach. In particular, following ~\cite[Proposition~3.1]{Beck12a}, the coefficients $c_{\bm\alpha}$ admit the bound 
\begin{equation}
\label{eqn:decay_rate}
\Vert c_{\bm\alpha}\Vert_{H^1_0(\mathcal{D})}\leq C_0 \frac{\vert\bm{\alpha}\vert !}{\bm{\alpha} !} e^{-\sum_{k=1}^d {g_k\alpha_k}},\quad g_k=-\log\left(r_k/(\sqrt{3}\log 2)\right),
\end{equation}
for some $C_0> 0$ and $\bm{\alpha}!=\prod_{k=1}^{d}{\alpha_k !}$. The coefficients $r_k$ in (\ref{eqn:decay_rate}) are given by $r_k=\frac{\sigma_a\sqrt{\lambda_k}\Vert\varphi_k\Vert_{L^{\infty}(D)}}{a_{\min}}$, where $a_{\min} = \bar{a} - \sigma_a\sum_{k=1}^d\sqrt{\lambda_k}\Vert\varphi_k\Vert_{L^{\infty}(D)}$. As suggested in \cite{Beck12a}, a tighter bound on $\Vert c_{\bm\alpha}\Vert_{H^1_0(\mathcal{D})}$ is obtained when the $g_k$ coefficients are computed numerically using one-dimensional analyses instead of the theoretical values given in (\ref{eqn:decay_rate}). Specifically, for each $k$, the random variables $\Xi_j$, $j\ne k$, in (\ref{eqn:PDE}) are set to their mean values and the PCE coefficients $c_{\alpha_k}$ of the corresponding solution -- now one-dimensional at the stochastic level --  are computed via, for instance, least-squares regression or sufficiently high level stochastic collocation. Notice that the total cost of such one-dimensional calculations depends linearly on $d$. Using these $c_{\alpha_k}$ values, the coefficient $g_k$ is computed from the one-dimensional version of (\ref{eqn:decay_rate}), i.e., $\vert c_{\alpha_k}\vert\sim e^{-g_k\alpha_k}$. In the present study, we adopt this numerical procedure to estimate each $g_k$.

As depicted in Fig.~\ref{fig:d40:soln_decay}, the bound in (\ref{eqn:decay_rate}) allows us to identify an anticipated $\bm{c}$, which we use for setting the weights $w_j$ in the weighted $\ell_1$-minimization approach. The magnitude of reference coefficients was calculated by the regression approach of \cite{Hosder06} using a sufficiently large number of solution realizations.
\begin{figure}[H]
\centering
\includegraphics[width=1\textwidth]{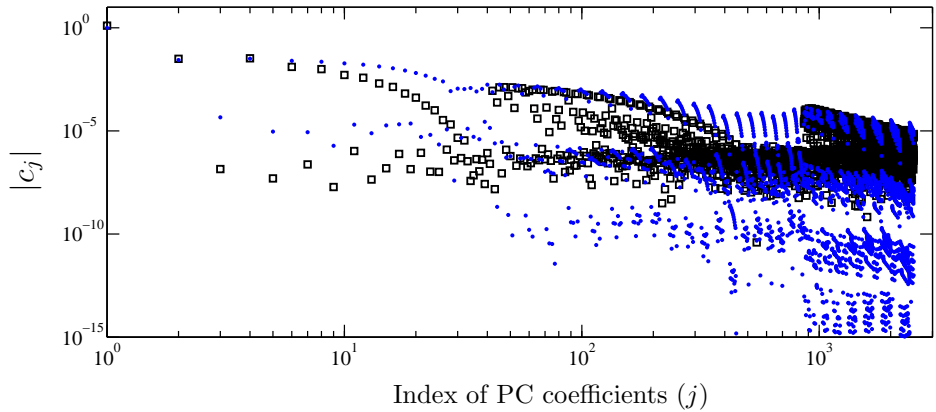}
\put(-235,0){\footnotesize Index of PC coefficients ($j$)}
\put(-380,80){\begin {sideways} $\vert c_j\vert$ \end{sideways}}
\caption{Polynomial chaos coefficients $\bm{c}$ of $u(0.5,\bm{\Xi})$ and the corresponding analytical bounds obtained from (\ref{eqn:decay_rate}) ({\scriptsize $\square$} reference; {\color{blue}$\bullet$} analytical bound).}
\label{fig:d40:soln_decay}
\end{figure}
We see that the reference values $\vert c_j\vert$ associated with some of the second and third degree basis functions decay slower than anticipated, but that the estimate is a reasonable guess without the use of realizations of $u(\bm x,\bm{\Xi})$.

\subsubsection{Results}
\label{subsubsec:result_d40}
%
%

%To complete the selection of the weights $w_j$ we set $p=1$ in (\ref{eqn:weight_norm1}) for the weighted $\ell_1$-minimization simulations. 
To demonstrate the convergence of the standard and weighted $\ell_1$-minimization, we consider an increasing number $N=\{81,200,1000\}$ of random solution samples. For each analysis, we estimate the truncation error tolerance $\epsilon$ in (\ref{eq:CS}) based on the cross-validation algorithm described in Section \ref{sec:cross_validation}. To account for the dependency of the compressive sampling solution on the choice of realizations, for each $N$, we perform $100$ replications of standard and weighted $\ell_1$-minimization, corresponding to independent solution realizations. We then generate uncertainty bars on solution accuracies based on these replications. 

Fig.~\ref{fig:d40} displays a comparison between the accuracy of $\ell_1$-minimization, weighted $\ell_1$-minimization, iteratively re-weighted $\ell_1$-minimization, and (isotropic) sparse grid stochastic collocation with Clenshaw-Curtis abscissas. The level one sparse grid contains $N=81$ points. In particular, we observe that both $\ell_1$-minimization and weighted $\ell_1$-minimization result in smaller standard deviation and root mean square (rms) errors, compared to the stochastic collocation approach. Additionally, the weighted $\ell_1$-minimization using the analytical decay of $\vert c_{\bm\alpha}\vert$ outperforms the iteratively re-weighted $\ell_1$-minimization. Moreover, for small sample sizes $N$, the weighted $\ell_1$-minimization outperforms the non-weighted approach. This is expected as the prior knowledge on the decay of $\vert c_{\bm\alpha}\vert$ has comparable effect on the accuracy as the solution realizations do. In fact, the trade-off between the prior knowledge (in the form of weights $w_j$) and the solution realizations (data) may be best seen in a Bayesian formulation of the compressive sampling problem (\ref{eq:CS}). We refer the interested reader to \cite{Tipping01,Ji08} for further information on this subject.

In the presence of the {\it a priori} estimates of the PC coefficients, one may consider solving a weighted least-squares regression problem $\mathcal{P}_{2,\epsilon}^{(\bm W)}\equiv\{\mathop{\arg\min}\limits_{\bm{c}_{\mathcal{C}}}\Vert \bm W\bm{c}_{\mathcal{C}}\Vert_2 : \Vert\bm{\Psi}_{\mathcal{C}}\bm{c}_{\mathcal{C}}-\bm{u}\Vert_2\leqslant\epsilon\}$, in which $\bm{c}_{\mathcal{C}}\in\mathbb{R}^{P}$ denotes vectors supported on a set $\mathcal{C}$ with cardinality $\vert \mathcal{C}\vert\le N$ identified based on the decay of PC coefficients. For example, to generate a well-posed weighted least-squares problem, $\mathcal{C}$ may contain the indices associated with $\vert \mathcal{C}\vert\le \lfloor N/2\rfloor$ largest (in magnitude) PC coefficients from (\ref{eqn:decay_rate}). Stated differently, the estimates of PC coefficients may be utilized to form least-squares problems for {\it small} subsets of the PC basis function that are expected to be important. However, our numerical experiments indicate that, unlike in the case of weighted $\ell_1$-minimization, the accuracy of such an approach is sensitive to the quality of the PC coefficient estimates, based on which $\mathcal{C}$ is set. Fig. \ref{fig:only_weight_d40} presents an illustration of such observation.

\begin{figure}[H]
  \centering
  \subfloat[Relative error in mean]{\label{fig:d40:mean}\includegraphics[width=0.5\textwidth]{}}
  
  \subfloat[Relative error in standard deviation]{\label{fig:d40:sd}\includegraphics[width=0.5\textwidth]{}}
  
  \subfloat[Relative rms error]{\label{fig:d40:mse}\includegraphics[width=0.5\textwidth]{}}

%  \subfloat[Success rate with different thresholds.]{\label{fig:d40:succrate}\includegraphics[width=0.45\textwidth]{}}
  \caption{Comparison of relative error in statistics of $u(0.5,\bm{\Xi})$ for $\ell_1$-minimization, weighted $\ell_1$-minimization, and isotropic sparse grid stochastic collocation (with Clenshaw-Curtis abscissas) for the case of the elliptic equation. The uncertainty bars are generated using 100 independent replications for each samples size $N$ (\usebox{\LegendeCL}~$\ell_1$-minimization;~\usebox{\LegendeW}~weighted $\ell_1$-minimization;~\usebox{\LegendeIW}~iteratively re-weighted $\ell_1$-minimization;~\usebox{\LegendeSC}~stochastic collocation).}
  \label{fig:d40}
\end{figure}

\begin{figure}[H]
  \centering
  \includegraphics[width=0.7\textwidth]{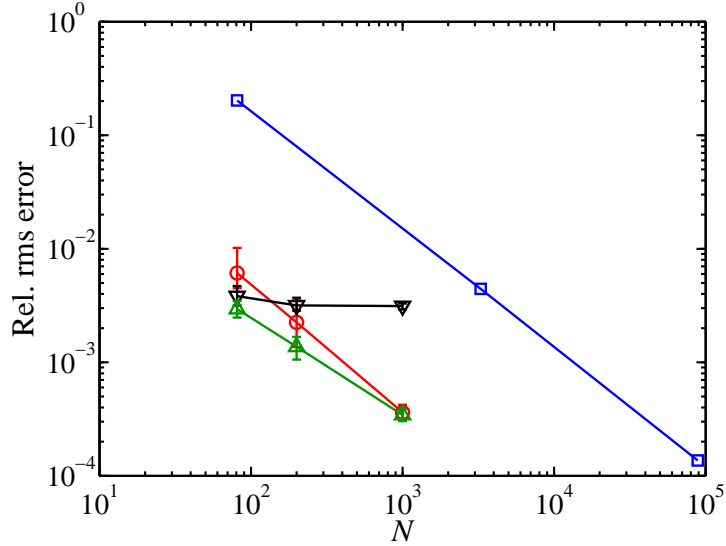}
  \caption{Comparison of relative rms error for $\ell_1$-minimization, weighted $\ell_1$-minimization, weighted least-squares regression, and sparse grid collocation for the case of the elliptic equation. In the weighted least-squares approach the set $\mathcal{C}$ with cardinality $\vert \mathcal{C}\vert = \lfloor N/2\rfloor$ contains the indices of the largest (in magnitude) upper bounds on the PC coefficients (\usebox{\LegendeCL}~$\ell_1$-minimization;~\usebox{\LegendeW}~weighted $\ell_1$-minimization;~\usebox{\LegendeIW}~weighted least-squares regression;~\usebox{\LegendeSC}~stochastic collocation).}
  \label{fig:only_weight_d40}
\end{figure}

\subsection{Case II: Thermally driven flow with stochastic boundary temperature}
\label{sec:L1_example_cavity}

Following \cite{LeMaitre02b,LeMaitre10,LeQuere91}, we next consider a $2$-D heat driven square cavity flow problem, shown in Fig.~\ref{subfig:cavity}, as another realization of (\ref{eqn:PDE_operator}). The left vertical wall has a deterministic, constant temperature $\tilde{T}_h$, referred to as the hot wall, while the right vertical wall has a stochastic temperature $\tilde T_c<\tilde T_h$ with constant mean $\bar{\tilde T}_c$, referred to as the cold wall. Both top and bottom walls are assumed to be adiabatic. The reference temperature and the reference temperature difference are defined as $\tilde T_{ref}=(\tilde{T}_h+\bar{\tilde T}_c)/2$ and $\Delta \tilde T_{ref}=\tilde{T}_h-\bar{\tilde T}_c$, respectively. In dimensionless variables, the governing equations (in the small temperature difference regime, i.e., Boussinesq approximation) are given by
\begin{equation}
\begin{aligned}
&\frac{\partial \bm{u}}{\partial t} + \bm{u}\cdot\nabla\bm{u}=-\nabla p + \frac{\text{Pr}}{\sqrt{\text{Ra}}}\nabla^2\bm{u}+\text{Pr}T\bm{\hat{y}},\\
& \nabla\cdot\bm{u}=0,\\ \label{eqn:cavity}
&\frac{\partial T}{\partial t}+ \nabla\cdot(\bm{u}T)=\frac{1}{\sqrt{\text{Ra}}}\nabla^2T,
\end{aligned}
\end{equation}
where $\bm{\hat{y}}$ is the unit vector $(0,1)$, $\bm{u}=(u,v)$ is velocity vector field, $T=(\tilde{T}-\tilde T_{ref})/\Delta \tilde T_{ref}$ is normalized temperature ($\tilde{T}$ denotes non-dimensional temperature), $p$ is pressure, and $t$ is time. Non-dimensional Prandtl and Rayleigh numbers are defined, respectively, as $\text{Pr}={\tilde \mu}\tilde c_p/\tilde \kappa$ and $\text{Ra}={\tilde \rho}{g}\beta\Delta \tilde T_{ref}{\tilde L}^3/({\tilde \mu}{\tilde \kappa})$, where the superscript tilde ($\tilde{~}$) denotes the non-dimensional quantities. Specifically, ${\tilde \rho}$ is density, ${\tilde L}$ is reference length, ${g}$ is gravitational acceleration, ${\tilde \mu}$ is molecular viscosity, ${\tilde \kappa}$ is thermal diffusivity, and the coefficient of thermal expansion is given by $\beta$. In this example, the Prandtl and Rayleigh numbers are set to $\text{Pr}=0.71$ and $\text{Ra}=10^6$, respectively. For more details on the non-dimensional variables in (\ref{eqn:cavity}), we refer the interested reader to \cite{LeQuere91,LeMaitre02b,LeMaitre10}.

On the cold wall, we apply a (normalized) temperature distribution with stochastic fluctuations of the form
\begin{equation}
\begin{aligned}
T_c(x=1,y,\bm\Xi)=\bar{T}_c + T_c',\\
T_c'=\sigma_T\sum_{i=1}^{d}\sqrt{\lambda_i}\varphi_i(y)\Xi_i,
\end{aligned}
\label{eqn:coldwall}
\end{equation}
where $\bar{T}_c$ is a constant mean temperature. In (\ref{eqn:coldwall}), $\Xi_i$, $i=1,\dots,d$, are independent random variables uniformly distributed on $[-1,1]$. $\{\lambda_i\}_{i=1}^d$ and $\{\varphi_i(y)\}_{i=1}^d$ are the $d$ largest eigenvalues and the corresponding eigenfunctions of the exponential covariance kernel
\begin{equation*}
C_{T_cT_c}(y_1,y_2) = \exp{\left(-\frac{\vert y_1-y_2\vert}{l_c}\right)},
\end{equation*}
where $l_c$ is the correlation length. Following ~\cite{Ghanem03}, the eigenpairs $(\lambda_i,\varphi_i(y))$ in~(\ref{eqn:coldwall}) are, respectively, given by
\begin{equation*}
\label{eqn:eigenvalue}
\lambda_i=\frac{2l_c}{l_c^2\omega_i^2+1},
\end{equation*}
and
\begin{equation*}
\label{eqn:eigenvec}
\varphi_i(y)=\left\{
\begin{aligned}
&\frac{\cos(\omega_i y)}{\sqrt{0.5+\frac{\sin(\omega_i)}{2\omega_i}}},~~& i~~\text{is odd},\\
&\frac{\sin(\omega_i y)}{\sqrt{0.5-\frac{\sin(\omega_i)}{2\omega_i}}},~~& i~~\text{is even},\\
\end{aligned}\right.
\end{equation*}
where each $\omega_i$ is a root of
\begin{equation*}
\label{eqn:omega}
\omega_i+(1/l_c)\tan(0.5\omega_i) = 0.
\end{equation*}

\begin{figure}[H]
\centering
\subfloat[Schematic of the geometry and boundary conditions.]{\label{subfig:cavity}
\includegraphics[height=0.27\textheight]{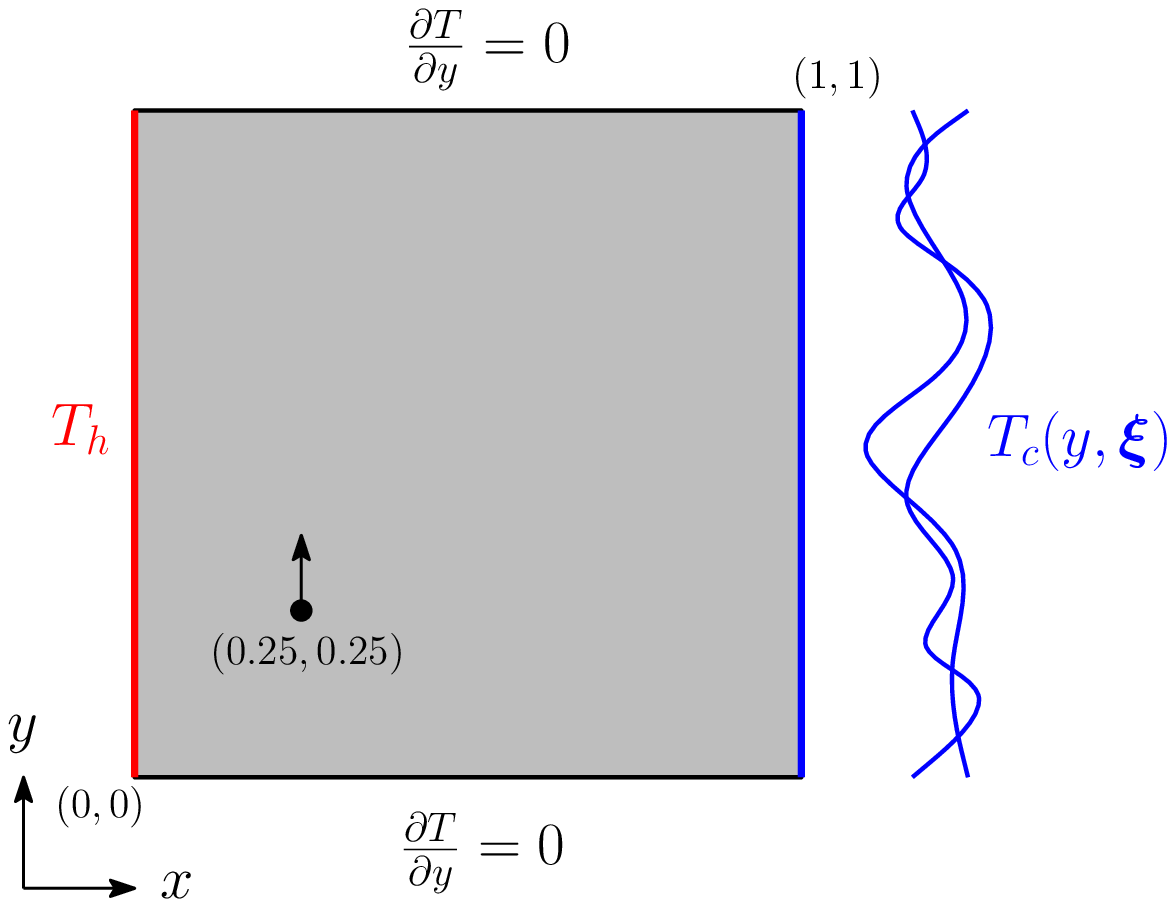}
}
\subfloat[A realization of $T_c(x=1,y)$.]{
\includegraphics[height=.25\textheight]{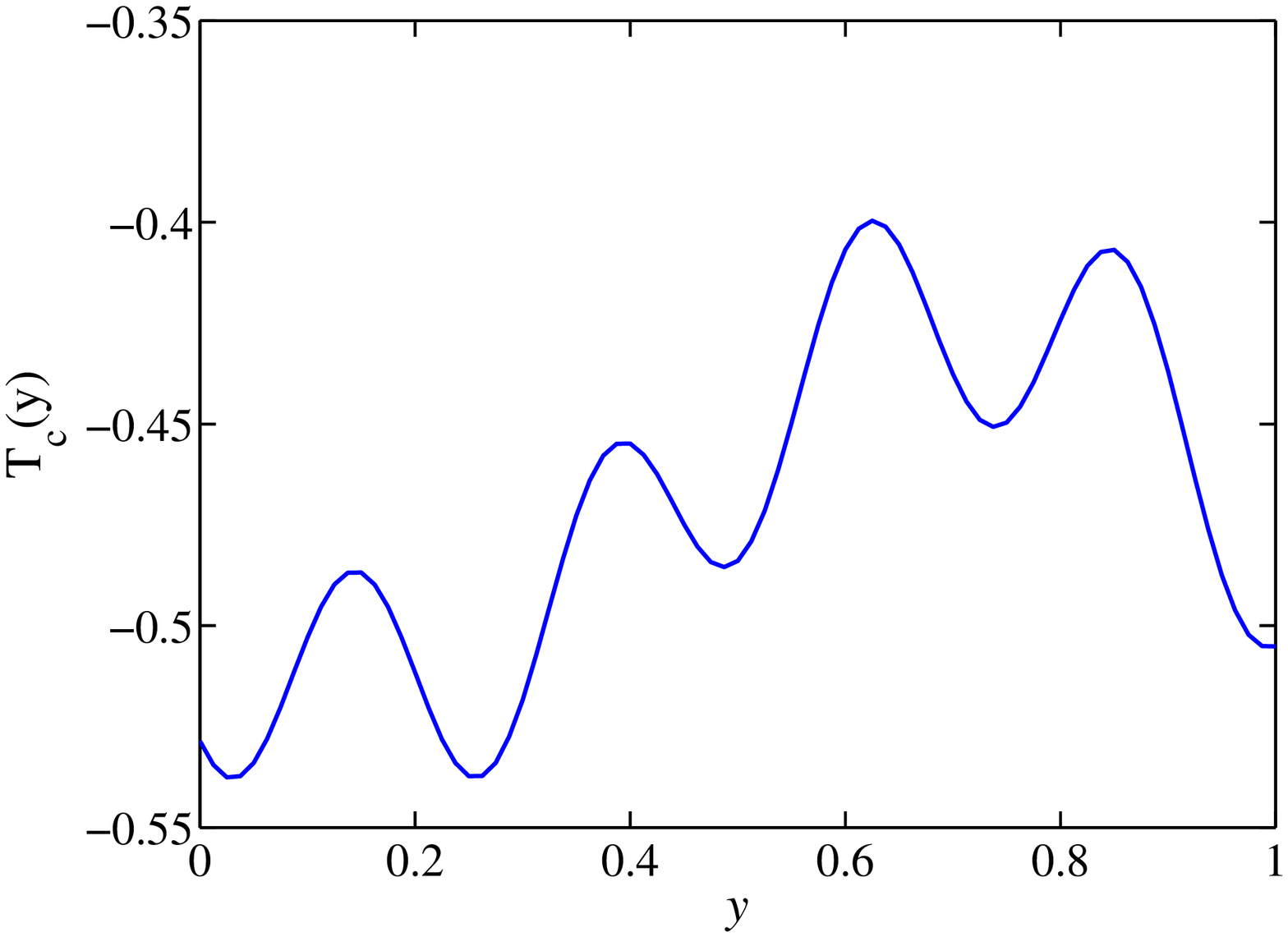}
\label{subfig:Tc}
}
\caption{Illustration of the cavity flow problem.}
\end{figure}

In our numerical test we let $(T_h,\bar T_c)=(0.5,-0.5)$, $d=20$, $l_c=1/21$, and $\sigma_T=11/100$. A realization of the cold wall temperature $T_c$ is shown in Fig.~\ref{subfig:Tc}. % {\color{red}[Again this $p$ appears arbitrarily chosen.]} $p$ on the power of weight in~(\ref{eqn:weight}) is $1/2$.
Our quantity of interest, the vertical velocity component at $(x,y)=(0.25,0.25)$ denoted by $v(0.25,0.25)$, is expanded in the Legendre PC basis of total degree $q=4$ with only the first $P=2500$ basis functions retained, as described in the case of the elliptic problem. We seek to accurately reconstruct $v(0.25,0.25)$ with $N< P$ random samples of $\bm{\Xi}$ and the corresponding realizations of $v(0.25,0.25)$. 

\subsubsection{Approximate bound on PC coefficients}
\label{subsec:rates}

In order to generate the weights $w_j$ for the weighted $\ell_1$-minimization reconstruction of $v(0.25,0.25)$, we derive an approximate bound on the PC coefficients of the velocity $v$ in (\ref{eqn:cavity}) at a fixed point in space.

For the interest of notation, we start by rewriting $T_c^{\prime}$ in (\ref{eqn:coldwall}) as
\begin{equation}
\label{eqn:T'_re}
T_c^{\prime}(y,\bm\Xi)=\sum_{i=1}^{d}\nu_i(y)\Xi_i,
\end{equation}
where $\nu_i(y)$, $i=1,\dots,d$, is given by
\begin{equation*}
\label{eqn:A_i}
\nu_i(y)=\sigma_T\sqrt{\frac{\lambda_i}{0.5+(-1)^{i-1}{\sin(\omega_i)}/{2\omega_i}}}\sin\left(\omega_i y+\frac{\pi}{2}\left((-1)^i+1\right)\right).
\end{equation*}

We write the PC expansion of $v$ as $v=\sum_{j}c_{j}\psi_{j}(\bm{\Xi})$ and seek approximate bounds on $\vert c_{j}\vert$ to set the weights $w_j$ in the weighted $\ell_1$-minimization results. By the orthonormality of the PC basis, $c_{j}$ is
\begin{equation}
\label{eq:exact_v_coef}
c_{j}=\int_{[-1,1]^d} v(\bm\xi)\psi_{j}(\bm{\xi})\left(\frac{1}{2}\right)^d d\bm{\xi}.
\end{equation}
To approximately bound the coefficients $c_{j}$, we examine the functional Taylor series expansion of $v$ around $v=v(\bar{T}_c)$. Note that by an appropriate definition of functional derivatives $\frac{\delta^k v}{\delta T^k_c}$ of $v$ with respect to $T_c$, see, e.g., \cite{Volterra59},
\begin{equation}
\label{eqn:Taylor_main}
v(\bm\Xi)=\mathop{\sum}\limits_{k=0}^{\infty}\frac{1}{k!}\int_{[0,1]^k}\frac{\delta^kv}{\delta \bar T^k_c}(\bm{y},\bm{\Xi})\mathop{\prod}\limits_{j=1}^kT^{\prime}_c(y_j,\bm\Xi)d\bm{y},
\end{equation}
where $y_j$ is a copy of the spatial coordinate variable $y$. Plugging (\ref{eqn:Taylor_main}) in (\ref{eq:exact_v_coef}), we arrive at
\begin{equation}
\label{eq:decay_cavity_bound}
c_{j} = \int_{[-1,1]^d}\psi_{j}(\bm{\xi}) \mathop{\sum}\limits_{k=0}^{\infty}\frac{1}{k!}\int_{[0,1]^k}\frac{\delta^kv}{\delta \bar T^k_c}(\bm{y},\bm{\xi})\mathop{\prod}\limits_{j=1}^kT^{\prime}_c(y_j,\bm{\xi})\left(\frac{1}{2}\right)^d d\bm{y}d\bm{\xi}.
\end{equation}

To handle the functional derivatives, we consider the dimensional relation
\begin{equation}
\label{eqn:scaling}
\bigg\vert\frac{\delta^kv}{\delta \bar T^k_c}(\bm{y})\bigg\vert\approx C\bigg\vert \frac{v(\bar{T}_c)}{\left(\bar{T}_c\right)^k}\bigg\vert,
\end{equation}
which we assume to hold uniformly in $\bm{y}$ and $\bm{\Xi}$, for some constant $C\ge 0$. This, together with (\ref{eqn:T'_re}), allows us to derive the approximate bound
\begin{equation}
\label{eqn:c_alpha_upper}
\vert c_{j}\vert\lessapprox C \vert v(\bar{T}_c)\vert \mathop{\sum}\limits_{k=0}^{\infty}\frac{1}{k! \vert\bar{T}_c \vert^k} \left\vert \int_{[-1,1]^d}\psi_{j}(\bm\xi)\left(\sum_{i=1}^d t_i\xi_i\right)^k \left(\frac{1}{2}\right)^d d\bm{\xi}\right\vert,
\end{equation}
where $t_i = \int_{0}^1\nu_i(y)dy$. In (\ref{eqn:c_alpha_upper}), the approximation comes from the assumption (\ref{eqn:scaling}) on the functional derivatives. To evaluate the RHS of (\ref{eqn:c_alpha_upper}), we consider a finite truncation of the sum and a Monte Carlo (or quadrature) estimation of the integral.  

In Fig.~\ref{fig:decay_rates}, we display the approximate upper bound on $\vert c_{j}\vert$ of $v(0.25,0.25)$ obtained from (\ref{eq:decay_cavity_bound}) by limiting $k$ to $4$. To generate a reference solution, we employ the least-squares regression approach of \cite{Hosder06} with $N=40,000$ random realizations of $v(0.25,0.25)$. For the accuracies of interest in this study, the convergence of this reference solution was verified. For the sake of illustration, we normalize the estimated $\vert c_{j}\vert$ so that $\vert c_{\bm 0}\vert$, the module of the approximate zero degree coefficient, matches its reference counterpart. Despite the rather strong assumption (\ref{eqn:scaling}) on the functional derivatives, we note that the resulting estimates of $\vert c_{j}\vert$ describe the trend of the reference values qualitatively well. As we shall see in what follows, such qualitative agreement is sufficient for the weighted $\ell_1$-minimization to improve the accuracy of the standard $\ell_1$-minimization for small samples sizes $N$. 

\begin{remark} We stress that the assumption (\ref{eqn:scaling}), while here lead to appropriate estimates of $\vert c_{j}\vert$ for our particular example of interest, it may not give equally reasonable estimates for other problems or choices of flow parameters, e.g., larger {$\mathrm Ra$} numbers. A weaker assumption on the functional derivatives in (\ref{eqn:scaling}), however, requires further study and is the subject of our future work.

\end{remark}

\begin{figure}[H]
\centering
\includegraphics[width=1\textwidth]{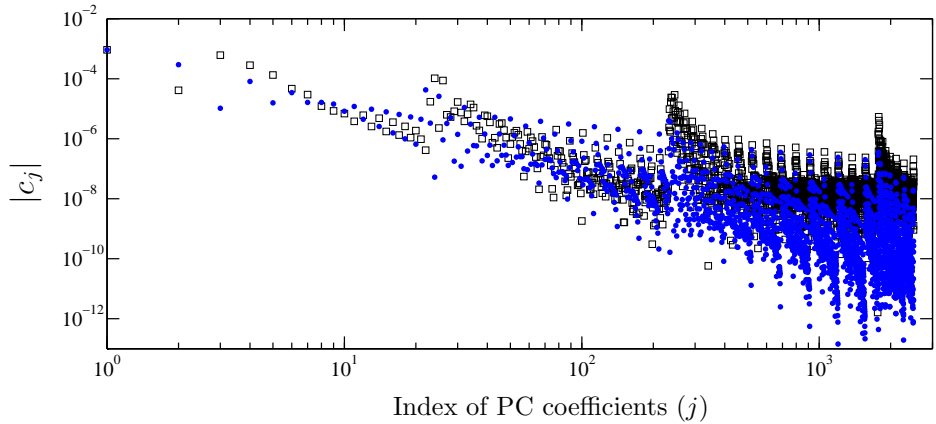}
\put(-235,0){\footnotesize Index of PC coefficients ($j$)}
\put(-380,80){\begin {sideways} $\vert c_j\vert$ \end{sideways}}
\caption{Approximate PC coefficients of $v(0.25,0.25)$ vs. the reference coefficients obtained by least-squares regression using sufficiently large number of realizations of $v(0.25,0.25)$ ({\scriptsize $\square$} reference; {\color{blue}$\bullet$} approximate bound).}
\label{fig:decay_rates}
\end{figure}

\subsubsection{Results}
\label{subsec:results_cavity}

We provide results demonstrating the convergence of the statistics of $v(0.25,0.25)$ as a function of the number of realizations $N$. For this, we consider sample sizes $N=\{41,200,1000\}$ with $N=41$ corresponding to the number of grid points in level one sparse gird collocation using Clenshaw-Curtis abscissas. 

Fig.~\ref{fig:comparison} displays comparisons between the accuracies obtained to approximate $v(0.25,0.25)$. Similar to the previous example, the weighted $\ell_1$-minimization approach achieves superior accuracy, particularly for the small sample size $N=41$. The results obtained for the iteratively re-wighted $\ell_1$-minimization correspond to $\epsilon_w=5\times 10^{-2}\cdot \hat{c}_1$, where $\hat{c}_1$ is the sample average of  $v(0.25,0.25)$. This leads to the smallest average rms errors among the trial values  $\epsilon_w = \{5\times 10^{-2},5\cdot 10^{-3},5\times 10^{-4}\}\cdot\hat{c}_1$. To show the sensitivity of this approach to the choice of $\epsilon_w$, we present rms error plots in Fig.~\ref{fig:sensitivity_cavity} corresponding to multiple values of $\epsilon_w$. In particular, for the cases of  $\epsilon_w = \{5\times 10^{-3},5\times 10^{-4}\}\cdot\hat{c}_1$, when $N=1000$ we observe loss of accuracy compared to the standard $\ell_1$-minimization. On the other hand, the weighted $\ell_1$-minimization results are relatively insensitive to the choice of $\epsilon_w$, and best performance is obtained with $\epsilon_w=5\times 10^{-4}\cdot\hat{c}_1$, i.e., the smallest and most intuitive value among the trials. 

We note that the rather poor performance of the sparse grid collocation is due to the relatively large contributions of some of the higher order PC modes, as may be observed from Fig.~\ref{fig:decay_rates}. Fig.~\ref{fig:approximation_200} shows the magnitude of PC coefficients of $v(0.25,0.25)$ obtained using standard and weighted $\ell_1$-minimization with $N=\{200,1000\}$ samples. The better approximation quality of the weighted $\ell_1$-minimization may be seen particularly from Figs.~\ref{fig:coefficients:200} and \ref{fig:coefficients:200_w}. Finally, in Fig.~\ref{fig:only_weight_cavity}, we present a comparison between the rms errors obtained from $\ell_1$-minimization, weighted $\ell_1$-minimization, weighted least-squares regression, and sparse grid stochastic collocation. The weighted least-squares regression approach performs poorly for $N=\{200,1000\}$ as some of the basis functions are selected incorrectly given the approximate bounds on the PC coefficients.   

%%%%%%%%%%%%%%%%%%%%%%%%%%%%%%%%%%%%%%%%%%%%%%%
%
\section{Conclusion}
\label{sec:conclusion}
%
%%%%%%%%%%%%%%%%%%%%%%%%%%%%%%%%%%%%%%%%%%%%%%%

Within the context of compressive sampling of sparse polynomial chaos (PC) expansions, we introduced a {\it weighted $\ell_1$-minimization} approach, wherein we utilized {\it a priori} knowledge on PC coefficients to enhance the accuracy of the standard $\ell_1$-minimization. The {\it a priori} knowledge of PC coefficients may be available in the form of analytical decay of the PC coefficients, e.g., for a class of linear elliptic PDEs with random data, or derived from simple dimensional analysis. These {\it a priori} estimates, when available, can be used to establish weighted $\ell_1$ norms that will further penalize small PC coefficients, and consequently improve the sparse approximation. We provided analytical results guaranteeing the convergence of the weighted $\ell_1$-minimization approach. 

The performance of the proposed weighted $\ell_1$-minimization approach was demonstrated through its application to two test cases. For the first example, dealing with a linear elliptic equation with random coefficient, existing analytical bounds on the magnitude of PC coefficients were adopted to establish the weights. In the second case, for a thermally driven flow problem with stochastic temperature boundary condition, we derived an approximate bound for the PC coefficients via a functional Taylor series expansion and a simple dimensional analysis. In both cases we demonstrated that the weighted $\ell_1$-minimization approach outperforms the non-weighted counterpart. Furthermore, better accuracies were obtained using the weighted $\ell_1$-minimization approach as compared to the iteratively re-weighted $\ell_1$-minimization. Numerical experiments illustrate the sensitivity of the latter approach, unlike the former, with respect to the choice of a parameter defining the weights. Finally, we demonstrated that selection of subsets of PC basis and solving well-posed weighted least-squares regression may result in poor accuracies. 

While our numerical and analytical results were for the case of Legendre PC expansions, our work may be extended to other choices of PC basis, such as those based on Hermite or Jacobi polynomials.

\section{Acknowledgements}
\label{sec:acknow}

We would like to thank Prof. Raul Tempone for bringing to our attention the use of the analytical PC estimates of the elliptic problem within the context of weighted $\ell_1$-minimization. We gratefully acknowledge the financial support of the Department of Energy under Advanced Scientific Computing Research Early Career Research Award DE-SC0006402. 

This work utilized the Janus supercomputer, which is supported by the National Science Foundation (award number CNS-0821794) and the University of Colorado Boulder. The Janus supercomputer is a joint effort of the University of Colorado Boulder, the University of Colorado Denver and the National Center for Atmospheric Research.

\begin{figure}[H]
  \centering
  \subfloat[Relative error in mean]{\label{fig:cavity:mean}\includegraphics[width=0.5\textwidth]{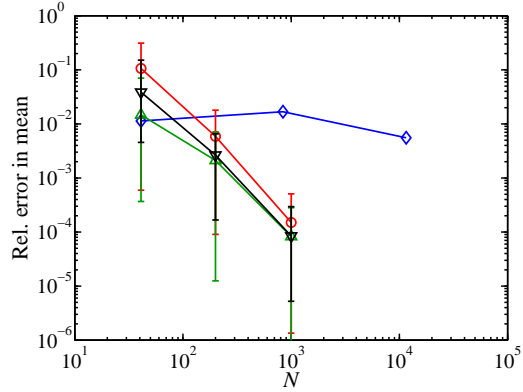}}
  
  \subfloat[Relative error in second moment]{\label{fig:cavity:sd}\includegraphics[width=0.5\textwidth]{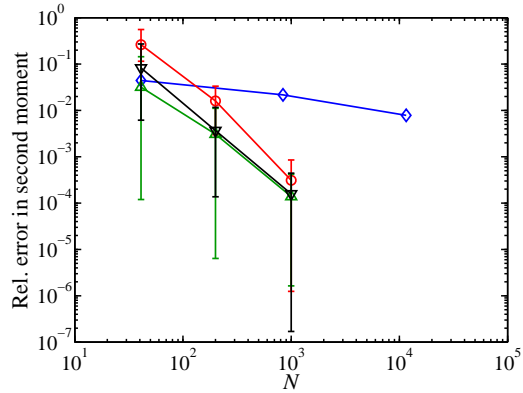}}
  
  \subfloat[Relative rms error]{\label{fig:cavity:mse}\includegraphics[width=0.5\textwidth]{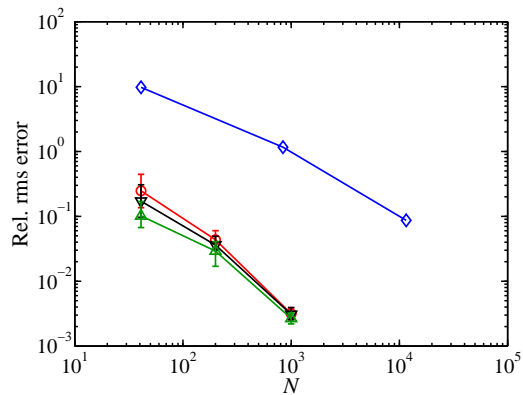}}

%  \subfloat[Success rate with different thresholds.]{\label{fig:d40:succrate}\includegraphics[width=0.45\textwidth]{d40/successrate.eps}}
  \caption{Comparison of relative error in statistics of {\color{black}$v(0.25,0.25)$} computed via $\ell_1$-minimization, weighted $\ell_1$-minimization, iteratively reweighed $\ell_1$-minimization, and stochastic collocation. The error bars are generated using 100 independent replications with fixed samples size $N$ (\usebox{\LegendeCL}~$\ell_1$-minimization;~\usebox{\LegendeW}~weighted $\ell_1$-minimization;~\usebox{\LegendeIW}~iteratively re-weighted $\ell_1$-minimization;~\usebox{\LegendeSC}~stochastic collocation).}
  \label{fig:comparison}
\end{figure}

%\begin{figure}[H]
%  \centering
%  \includegraphics[width=0.7\textwidth]{}
%  \caption{Relative average rms errors corresponding to multiple values }%of $\epsilon_w$ to set the weights $w_j$. The results demonstrate the sensitivity of the iteratively re-weighted approach to the choice of $\epsilon_p$ (\usebox{\LegendeCL}~$\ell_1$-minimization;~\usebox{\LegendeW}~weighted $\ell_1$-minimization;~\usebox{\LegendeIW}~iteratively re-weighted $\ell_1$-minimization; solid lines $\epsilon_w = 5\times 10^{-2}\cdot\hat{c}_1$; dashed lines $\epsilon_w = 5\times 10^{-3}\cdot\hat{c}_1$; dotted dashed lines $\epsilon_w = 5\times 10^{-4}\cdot\hat{c}_1$). Here, $\hat{c}_1$ is the samples average of $v(0.25,0.25)$.}
% \label{fig:sensitivity_cavity)
%\end{figure}
%

\begin{figure}[H]
  \centering
  \includegraphics[width=0.7\textwidth]{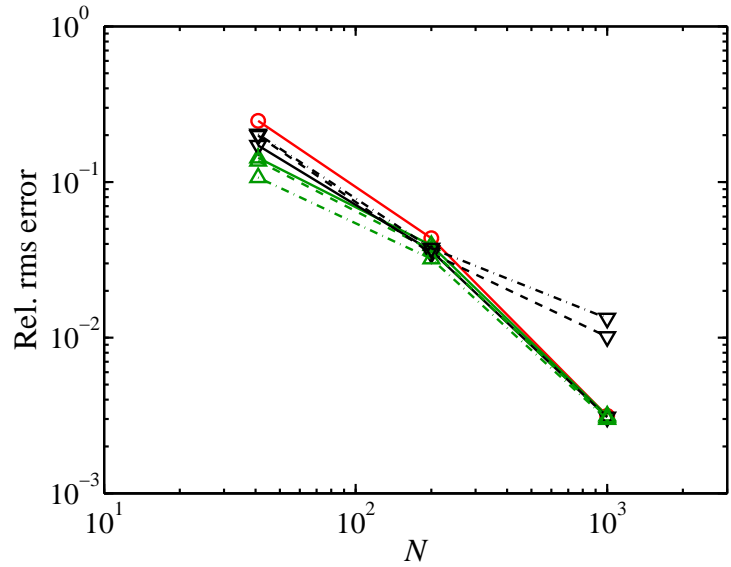}
  \caption{Relative average rms errors corresponding to multiple values of $\epsilon_w$ to set the weights $w_j$. The results demonstrate the sensitivity of the iteratively re-weighted approach to the choice of $\epsilon_p$ (\usebox{\LegendeCL}~$\ell_1$-minimization;~\usebox{\LegendeW}~weighted $\ell_1$-minimization;~\usebox{\LegendeIW}~iteratively re-weighted $\ell_1$-minimization; solid lines $\epsilon_w = 5\times 10^{-2}\cdot\hat{c}_1$; dashed lines $\epsilon_w = 5\times 10^{-3}\cdot\hat{c}_1$; dotted dashed lines $\epsilon_w = 5\times 10^{-4}\cdot\hat{c}_1$). Here, $\hat{c}_1$ is the sample average of $v(0.25,0.25)$.}
  \label{fig:sensitivity_cavity}
\end{figure}

\begin{figure}[H]
\centering
  \subfloat[$\ell_1$-minimization]{\label{fig:coefficients:200}\includegraphics[width=0.7\textwidth]{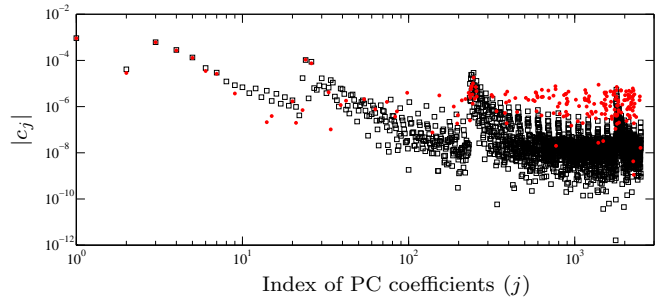}}
\put(-170,0){\scriptsize Index of PC coefficients ($j$)}
\put(-265,55){\begin {sideways}{\scriptsize $\vert c_j\vert$} \end{sideways}}

  \subfloat[Weighted $\ell_1$-minimization]{\label{fig:coefficients:200_w}\includegraphics[width=0.7\textwidth]{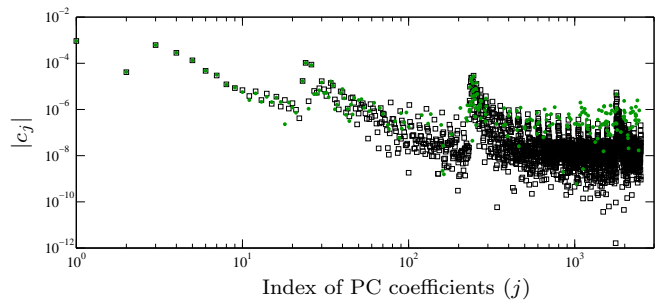}}  
\put(-170,0){\scriptsize Index of PC coefficients ($j$)}
\put(-265,55){\begin {sideways}{\scriptsize $\vert c_j\vert$} \end{sideways}}

  \subfloat[$\ell_1$-minimization]{\label{fig:coefficients:1000}\includegraphics[width=0.7\textwidth]{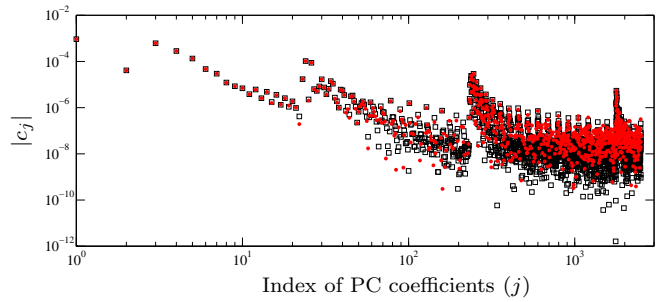}}
\put(-170,0){\scriptsize Index of PC coefficients ($j$)}
\put(-265,55){\begin {sideways}{\scriptsize $\vert c_j\vert$} \end{sideways}}
  
  \subfloat[Weighted $\ell_1$-minimization]{\label{fig:coefficients:1000_w}\includegraphics[width=0.7\textwidth]{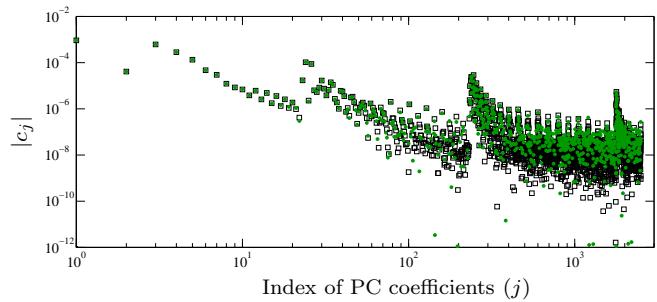}}
\put(-170,0){\scriptsize Index of PC coefficients ($j$)}
\put(-265,55){\begin {sideways}{\scriptsize $\vert c_j\vert$} \end{sideways}}
  
\caption{Approximation of PC coefficients of $v(0.25,0.25)$ using $N=200$ samples (a), (b) and $N=1000$ samples (c), (d) ({\scriptsize $\square$} reference; {\color{red}$\bullet$} $\ell_1$-minimization;  {\color{darkgreen}$\bullet$} weighted $\ell_1$-minimization).}
\label{fig:approximation_200}
\end{figure}

\begin{figure}[H]
  \centering
  \includegraphics[width=0.7\textwidth]{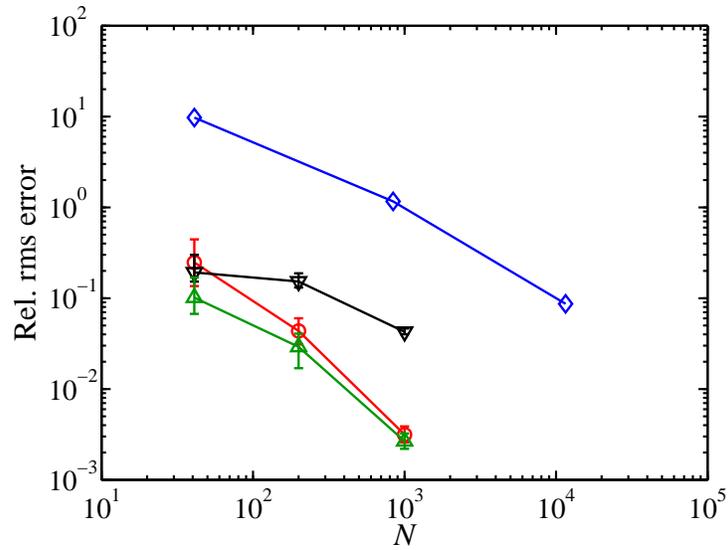}
  \caption{Comparison of relative rms error for $\ell_1$-minimization, weighted $\ell_1$-minimization, weighted least-squares regression, and sparse grid collocation for the cavity flow problem. In the weighted least-squares approach, the set $\mathcal{C}$ with cardinality $\vert \mathcal{C}\vert = \lfloor N/2\rfloor$ contains the indices of the largest (in magnitude) approximate upper bounds on the PC coefficients (\usebox{\LegendeCL}~$\ell_1$-minimization;~\usebox{\LegendeW}~weighted $\ell_1$-minimization;~\usebox{\LegendeIW}~weighted least-squares regression;~\usebox{\LegendeSC}~stochastic collocation).}
  \label{fig:only_weight_cavity}
\end{figure}

%% The Appendices part is started with the command \appendix;
%% appendix sections are then done as normal sections
%% \appendix
%% References
%%
%% Following citation commands can be used in the body text:
%% Usage of \cite is as follows:
%%   \cite{key}         ==>>  [#]
%%   \cite[chap. 2]{key} ==>> [#, chap. 2]
%%
%% References with bibTeX database:
\newpage
\bibliographystyle{elsarticle-num}
\bibliography{AD_bib_v1}
%% Authors are advised to submit their bibtex database files. They are
%% requested to list a bibtex style file in the manuscript if they do
%% not want to use elsarticle-num.bst.
%% References without bibTeX database:
% \begin{thebibliography}{00}
%% \bibitem must have the following form:
%%   \bibitem{key}...
%%
% \bibitem{}
% \end{thebibliography}
\end{document}